\newtheorem*{acknowledgements}{Acknowledgements}
\newtheorem*{theorem*}{Theorem}
\newtheorem{theorem}{Theorem}[section]
\newtheorem{corollary}{Corollary}[section]
\newtheorem{lemma}{Lemma}[section]
\theoremstyle{definition}
\newtheorem{example}{Example}
\newtheorem{remark}[example]{Remark}
\numberwithin{equation}{section}
\let\oldsqrt\sqrt
\def\sqrt{\mathpalette\DHLhksqrt}
\def\DHLhksqrt#1#2{%
\setbox0=\hbox{$#1\oldsqrt{#2\,}$}\dimen0=\ht0
\advance\dimen0-0.2\ht0
\setbox2=\hbox{\vrule height\ht0 depth -\dimen0}%
{\box0\lower0.4pt\box2}}
\DeclareFontFamily{U}{mathx}{\hyphenchar\font45}
\DeclareFontShape{U}{mathx}{m}{n}{
      <5> <6> <7> <8> <9> <10>
      <10.95> <12> <14.4> <17.28> <20.74> <24.88>
      mathx10
      }{}
\DeclareSymbolFont{mathx}{U}{mathx}{m}{n}
\DeclareMathAccent{\widecheck}{0}{mathx}{"71}
\newcommand\eps\varepsilon
\renewcommand\epsilon\varepsilon
\newcommand{\abs}[1]{\left\lvert #1 \right\rvert}
\newcommand{\norm}[1]{\lVert #1 \rVert}
\newcommand\dist{\operatorname{dist}}
\newcommand\Mand{\text{ and }}
\newcommand\Mfor{\text{ for }}
\newcommand\paperintro%
\newcommand\paperbody%
\newcommand\bbG{\mathbb{G}}
\newcommand\bbH{\mathbb{H}}
\newcommand\bbR{\mathbb{R}}
\newcommand\cA{\mathcal{A}}
\newcommand\cD{\mathcal{D}}
\newcommand\cF{\mathcal{F}}
\newcommand\cH{\mathcal{H}}
\newcommand\mf[1]{\mathfrak{ #1}}
\DeclareMathAlphabet{\mathpzc}{OT1}{pzc}{m}{it}
\newcommand{\sbs}{\subset}
\def\@tocline#1#2#3#4#5#6#7{\relax
  \ifnum #1>\c@tocdepth 
  \else
    \par \addpenalty\@secpenalty\addvspace{#2}%
    \begingroup \hyphenpenalty\@M
    \@ifempty{#4}{%
      \@tempdima\csname r@tocindent\number#1\endcsname\relax
    }{%
      \@tempdima#4\relax
    }%
    \parindent\z@ \leftskip#3\relax \advance\leftskip\@tempdima\relax
    \rightskip\@pnumwidth plus4em \parfillskip-\@pnumwidth
    #5\leavevmode\hskip-\@tempdima
      \ifcase #1
       \or\or \hskip 1em \or \hskip 2em \else \hskip 3em \fi%
      #6\nobreak\relax
    \hfill\hbox to\@pnumwidth{\@tocpagenum{#7}}\par
    \nobreak
    \endgroup
  \fi}
\def\annu#1{_{%
  \vbox{\hrule height .2pt 
    \kern 1pt 
    \hbox{$\scriptstyle {#1}\kern 1pt$}%
  }\kern-.05pt 
  \vrule width .2pt 
}}
\title[Bakry-\'Emery for weighted contractivity and $L^2$-Hardy inequalities]{A Bakry-\'Emery criterion for weighted contractivity and $L^2$-Hardy inequalities}
\author{Yaozhong Qiu}
\address{Department of Mathematics, Imperial College London, 180 Queen’s
Gate, London, SW7 2AZ, United Kingdom}
\email{y.qiu20@imperial.ac.uk}
\begin{document}

\begin{abstract}
We show a symmetric Markov diffusion semigroup satisfies a weighted contractivity condition if and only if a $L^2$-Hardy inequality holds, and we give a Bakry-\'Emery type criterion for the former. We then give some applications. 
\end{abstract}

\maketitle

\section{Introduction}

In this paper, we prove $L^2$-Hardy inequalities from the viewpoint of symmetric Markov semigroups $(P_t)_{t \geq 0}$ generated by a diffusion $L$ with carr\'e du champ $\Gamma$ and reversible measure $\mu$. Our starting point is the general theory of Markov semigroups which associates a property (or properties) of the semigroup with a functional inequality, for instance the hypercontractivity and exponentially fast entropic convergence to equilibrium of the semigroup with the logarithmic Sobolev inequality. Here we shall associate the titular weighted contractivity of the semigroup with the Hardy inequality, and give a sufficient condition for the former following the ideas of \cite{RZ21} where the authors studied the generalised Bakry-\'Emery calculus for a carr\'e du champ with an additional multiplicative term. 

The classical Bakry-\'Emery calculus is typically associated with the study of semigroups generated by elliptic diffusions, for instance when $L = \Delta - \nabla U \cdot \nabla$ is a (euclidean) Kolmogorov type operator on $\bbR^m$ where $\nabla$ and $\Delta$ are respectively the euclidean gradient and laplacian and $U: \bbR^m \rightarrow \bbR$ is a suitable real-valued function. When $L$ is nonelliptic however, for instance when $\nabla$ and $\Delta$ are respectively replaced with the subgradient $\nabla_{\bbH^m}$ and sublaplacian $\Delta_{\bbH^m}$ on the Heisenberg group $\bbH^m$, the classical curvature(-dimension) condition is violated (see, for instance, \cite[pp.~3]{Bak08}). Actually, the classical Bakry-\'Emery calculus is already strong enough to recover the Hardy inequality, at least in the euclidean setting. Indeed, it was proved in \cite{DGZ21} that the Hardy inequality, which can be regarded as a special case of the Caffarelli-Kohn-Nirenberg inequality 
\[ \left(\int_{\bbR^m} \frac{\abs{f}^p}{\abs{x}^{bp}}dx\right)^{2/p} \leq C_{a, b}\int_{\bbR^m} \frac{\abs{\nabla f}^2}{\abs{x}^{2a}}dx, \] 
can be realised as a special case of the Sobolev inequality 
\[ \norm{f}_p^2 \lesssim \norm{\nabla f}^2_2 \]
on a weighted manifold, and therefore can be recovered by the classical Bakry-\'Emery calculus (see, for instance, \cite[\S2.3]{Bak04} or \cite[\S6]{BGL14}). 

In the nonelliptic setting, the theory of generalised Bakry-\'Emery calculi has bore fruit. Roughly speaking, the classical Bakry-\'Emery calculus studies the ``gamma calculus" or ``$\Gamma_2$-calculus" of the carr\'e du champ operator $\Gamma$, whereas generalised Bakry-\'Emery calculi study the gamma calculi of other (perhaps similar) operators. For instance, \cite{BB12, BG16} developed a generalised curvature-dimension condition adapted to the subelliptic (or subriemannian) setting and proved analogues of the Poincar\'e and logarithmic Sobolev inequalities by studying a carr\'e du champ with an additional gradient term containing derivatives in the so-called ``vertical" direction. \cite{Bau17, baudoin2021gamma} used a similar idea to study the kinetic Fokker-Planck equation and recovered hypocoercive type estimates for convergence to equilibrium which appeared in \cite{Vil06}. We refer the reader to \cite{Mon19} for more details on generalised Bakry-\'Emery calculi and their applications. Our analysis falls under the scope of the framework of \cite{Mon19} and, in the language of that paper, corresponds to the study of $\Phi^W(f) = W^2f^2$. Note $\Phi^W$ contains none of the traditional gradient fields, but it can still be regarded as a generalised carr\'e du champ associated to a differential operator of order zero.

The contents of this paper are organised as follows. In \S3, we study the generalised Bakry-\'Emery calculus, denoted by $\Gamma^W$, for $\Phi^W$, and prove a correspondence between the curvature condition $\Gamma^W(f) \geq \gamma \Phi^W(f)$ and the pointwise subcommutation $\Phi^W(P_tf) \leq e^{-2\gamma t} P_t \Phi^W(f)$. We then prove a correspondence between the integrated subcommutation 
\[ \int \Phi^W(P_tf)d\mu \leq e^{-2\gamma t} \int \Phi^W(f)d\mu \]
and the functional inequality 
\[ \int L(W^2)f^2d\mu \leq 2\int W^2\Gamma(f)d\mu - 2\gamma \int W^2f^2d\mu, \]
that is, for $\gamma = 0$, a correspondence between the weighted contractivity of the semigroup and the Hardy inequality. Following this, we restate a sufficient condition first appearing in \cite{RZ21} for the curvature condition, which sequentially implies the pointwise subcommutation, the integrated subcommutation, and finally the Hardy inequality. In \S4, we present some applications of our results in various settings. We refer the reader to \cite{Dav99, BEL15} for a general review of Hardy inequalities and their applications. 

\section{Preliminaries}
An abstract definition of a diffusion can be given in the absence of a differentiable structure together with an abstract notion of regularity, but we shall restrict our attention momentarily here to $\bbR^m$ where a diffusion $L$ is defined as a second order differential operator of the form
\begin{equation}\label{intro:L}
    L = \sum_{i,j=1}^m a_{ij}\partial_{ij} + \sum_{i=1}^m b_i\partial_i
\end{equation}
where the real-valued coefficients $a_{ij}, b_i$ are smooth for all $i, j = 1, \cdots, m$ and the matrix $a(x) = (a_{ij}(x))_{i,j=1}^m$ is a symmetric positive semidefinite quadratic form for all $x \in \bbR^m$. If $a$ is in fact (uniformly) positive definite, then we say $L$ is (uniformly) elliptic. 

In the Bakry-\'Emery language, intrinsic to $L$ is its carr\'e du champ 
\begin{equation}\label{intro:gamma}
    \Gamma(f, g) = \frac{1}{2}(L(fg) - fLg - gLf)
\end{equation}
which, at least in the setting of diffusions, encodes the second order part of $L$, in that if $L$ takes the form \eqref{intro:L}, then 
\[ \Gamma(f, g) = \sum_{i,j=1}^m a_{ij}\partial_if\partial_jg = a(\nabla_{\bbR^m} f, \nabla_{\bbR^m} g). \]
Note since $a$ is positive semidefinite the carr\'e du champ is nonnegative, which, in particular, implies the carr\'e du champ is a symmetric bilinear positive semidefinite form and therefore satisfies the Cauchy-Schwarz inequality 
\begin{equation}\label{intro:gammacs}
    \Gamma(f, g)^2 \leq \Gamma(f, f) \Gamma(g, g).
\end{equation}
We record some properties of the carr\'e du champ that appear in the sequel. The first is that the carr\'e du champ allows us to succinctly express the fact $L$ satisfies the chain rule
\begin{equation}\label{intro:chainrule1}
    L\phi(f) = \phi'(f)Lf + \phi''(f)\Gamma(f)
\end{equation}
for all smooth functions $\phi: \bbR \rightarrow \bbR$. Consequently, the carr\'e du champ, which is a derivation in both arguments, also satisfies the chain rule
\begin{equation}\label{intro:chainrule2}
    \Gamma(\phi(f), g) = \phi'(f)\Gamma(f, g).
\end{equation}
The second is that the carr\'e du champ is related to $L$ through an integration by parts formula. For this we assume the existence of a measure $\mu$ on $\bbR^m$ which is invariant and reversible for $L$, meaning respectively 
\begin{equation}\label{intro:invariance1}
    \int Lf d\mu = 0
\end{equation}
and
\begin{equation}\label{intro:symmetry}
    \int fLg d\mu = \int gLf d\mu.
\end{equation}
The integration by parts formula now follows from directly integrating the definition \eqref{intro:gamma} of the carr\'e du champ:
\begin{equation}\label{intro:ibp}
    \int fLg d\mu = -\int \Gamma(f, g)d\mu.
\end{equation}

The reader may notice we have made no mention of the function space on which the previous identities hold. One of the technical difficulties we will encounter is that the Bakry-\'Emery setting requires certain regularity assumptions which will not be a priori available. We will provide an abstract framework for which the above identities hold together with additional technical conditions under which formal Bakry-\'Emery type computations can be justified. It turns out that even in their absence we will be able to prove the Hardy inequality at the cost of being unable to pass through the multiplicative subcommutation property. In either case, we will provide examples. 

From the point of view of Hardy inequalities, technical conditions beyond functions being compactly supported away from singularities are typically unimportant, and even then for nice enough weights locally integrable at their singularities (e.g. $\abs{x}^{-2}$ in dimensions $m \geq 3$), the distinction between $f \in C_c^\infty(\bbR^m \setminus \{0\})$ and $f \in C_c^\infty(\bbR^m)$ can be dispensed with a standard density argument. However, at the level of the correspondence between a Hardy inequality and a property of a Markov semigroup, we need a function space in which formal computations can be performed. Although some standard and weighted Hardy inequalities can be achieved through some ``nice" diffusions for which the technical conditions can be explicitly verified, we may also formally realise weighted Hardy inequalities by modifying the diffusion in such a way the (possibly nonsmooth) weight is carried either by the carr\'e du champ or by the invariant measure; the technical conditions are then harder to verify. 

Before we close this chapter, we remark that although we will be interested in applying semigroup theoretic techniques with the goal being to prove Hardy inequalities, the study of (symmetric) Markov semigroups often intersects other fields of research. For instance, under some mild conditions, a Markov semigroup expresses the law of a Markov process which itself is the solution of a stochastic differential equation, so that properties in one setting transfer to another. There is also a one-to-one correspondence between a semigroup and its generator and a mechanism to pass from one to the other, so that properties of the generator can be expressed in terms of properties of the semigroup and vice versa. More relevant to this paper is the connection between a semigroup and a functional inequality. 

The discussion in this chapter has been distilled from the following. We refer the reader to \cite{Led00, Bak04, Wan06, Bau14, BGL14} for more details on Markov semigroups, and to \cite{Dav89, ENB00} for more functional analytic aspects. 
\section{Main results}

\subsection{Multiplicative subcommutation}
In the sequel, we shall assume we are given a diffusion Markov triple $(X, \mu, \Gamma)$ as defined in \cite[\S3.4.1]{BGL14}, meaning we have a measure space $(X, \cF, \mu)$, a class $\cA_0$ of real-valued measurable functions on $X$, and a carr\'e du champ $\Gamma: \cA_0 \times \cA_0 \rightarrow \cA_0$. In doing so, the discussion presented in the previous chapter actually goes in the opposite direction, in that the assumptions of a diffusion Markov triple provide not only for $f, g \in \cA_0$ the Cauchy-Schwarz inequality \eqref{intro:gammacs} and chain rule \eqref{intro:chainrule2} for $\Gamma$, but also implicity \emph{defines} the generator $L$ of the semigroup through the integration by parts formula \eqref{intro:ibp}. The chain rule \eqref{intro:chainrule1} for $L$ follows from the chain rule for $\Gamma$, invariance \eqref{intro:invariance1} with respect to $\mu$ becomes instead an assumption, and reversibility \eqref{intro:symmetry} follows from \eqref{intro:ibp}. 

As we will always be working with a diffusion $L$ and a multiplier $W$, we will dispense of the abstraction and choose $\cA_0 = C_c^\infty(X')$ where
\[ X' = \{x \in X \mid L, W, W^{-1} \text{ are smooth and } W, W^{-1} > 0 \text{ at } x\}. \]
In doing so, we ensure that \eqref{intro:ibp} extends to $f \in C_c^\infty(X')$ and $g \in C^\infty(X')$, which more concretely justifies formal integration by parts when at least one function $f$ is compactly supported away from the singularities. 

We will also dispense of two assumptions, namely the second and seventh assumptions of a diffusion Markov triple that $\cA_0$ be dense in every $L^p(\mu)$, $1 \leq p < \infty$, and that the domain $\cD(L)$ of $L$ is the closure of the form norm
\[ Q(f, g) = \int fgd\mu + \int \Gamma(f, g)d\mu \] 
in $L^2(\mu)$, and allow our operator to instead be closed in any closed subspace $\cH \sbs L^2(\mu)$ with domain $\cD(L)$. The main reason is to allow for the possibility that the semigroup is generated by another self-adjoint extension of $L$. We define the semigroup $(P_t)_{t \geq 0}$ on $\cH$ generated by $L$ by the spectral theorem. 

Although the Bakry-\'Emery setting typically deals with Markov semigroups, meaning the semigroup preserves nonnegative functions and preserves the unit function, i.e. $P_t\mathbbm{1} = \mathbbm{1}$, the above semigroup is a priori only sub-Markov, meaning $P_tf \leq 1$ if $f \leq 1$, since, for instance, the unit function need not belong to $\cH$ which occurs in the generic case $\cH = L^2(\bbR^m)$. In particular, the invariance \eqref{intro:invariance1} of $\mu$ valid on $\cA_0$ is only true up to upper bound on $\cD(L)$, i.e. $\int Lfd\mu \leq 0$ for $f \in \cD(L)$, and $P_t$ does not increase mass on $\cH$, that is
\begin{equation}\label{intro:invariance2}
\int P_tfd\mu \leq \int fd\mu, \quad f \in \cH. 
\end{equation}
When $P_t$ is actually a Markov semigroup and $P_t\mathbbm{1} = \mathbbm{1}$, then \eqref{intro:invariance1} is valid on $\cD(L)$ and $P_t$ is mass-preserving on $\cH$ in the sense \eqref{intro:invariance2} is an equality. The ``symmetric" descriptor asserts the fact $P_t$ is symmetric in $\cH$, a property it inherits from the fact $L$ is also self-adjoint in $\cH$. Lastly, it can also be shown $(P_t)_{t \geq 0}$ is a strongly continuous contraction semigroup; this is not important here, but is important for reasons beyond the scope of this review. 

The main technical assumption we will make to ensure the proof of the pointwise subcommutation goes through is the following. Let $W \in C^\infty(X)$ where $C^\infty(X)$ consists of functions which are smooth almost everywhere. We will request that $f$ belongs to 
\begin{equation}\label{technical}
    \cA_0(W) = \{f \in \cA_0 \mid LP_s(W^2(P_tf)^2) = P_sL(W^2(P_tf)^2) \Mfor s, t > 0\}. 
\end{equation}
This appears at first a very innocuous assumption on $f$, since if $L$ generates $P_t$ then they certainly commute on $\cD(L) \sbs \cH$. However, if $L$ is a differential operator and $f$ is smooth, then $Lf$ a priori has meaning even if $f \notin \cD(L)$ and we may consider the question of whether $L$ and $P_t$ commute on a larger class of functions than $\cD(L)$ which, as observed in \cite[pp.~138]{BGL14}, is essentially a matter of whether we may integrate by parts. For instance, if $L = \Delta_{\bbR^m}$ and $(P_t)_{t \geq 0}$ is the standard euclidean heat semigroup, then it can be shown by explicit computation that $\partial_t P_tf = LP_tf = P_tLf$ holds for $f$ the product of a polynomial and an exponential.

\begin{theorem}[{\textup{\cite[Theorem~3]{RZ21}}}]\label{thm:multineq}
Let $0 \leq W \in C^\infty(X)$ and $f \in \cA_0(W)$. The pointwise subcommutation 
\begin{equation}\label{multineq} 
    \Phi^W(P_tf) = W^2(P_tf)^2 \leq e^{-2\gamma t} P_t(W^2f^2) = e^{-2\gamma t} P_t\Phi^W(f)
\end{equation}
holds for some constant $\gamma \in \bbR$ and $t \geq 0$ if and only if the curvature condition
\begin{equation}\label{truecond}
\begin{aligned}
    \Gamma^W(f) = \frac{1}{2}(L(W^2)f^2 + 2\Gamma(W^2, f^2) + 2W^2\Gamma(f)) \geq \gamma \Phi^W(f)
\end{aligned}
\end{equation}
holds, and the curvature condition \eqref{truecond} holds if
\begin{equation}\label{suffcond}
    \inf_{W \neq 0} \frac{LW}{W} - 3 \frac{\Gamma(W)}{W^2} \geq \gamma > -\infty.
\end{equation}
\end{theorem}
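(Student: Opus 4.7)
The plan is to prove the theorem in three stages following the structure of the statement: first the forward direction (curvature $\Rightarrow$ pointwise subcommutation) via a heat-flow interpolation, then the reverse via differentiation at $t = 0$, and finally the sufficient condition via a Cauchy-Schwarz/Young estimate.

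For the forward direction, I would introduce the interpolation $\psi(s) = P_s\Phi^W(P_{t-s}f)$ on $s \in [0, t]$, noting that $\psi(0) = \Phi^W(P_tf)$ and $\psi(t) = P_t\Phi^W(f)$. Writing $g = P_{t-s}f$ so that $\partial_s g = -Lg$, a formal differentiation gives $\psi'(s) = P_s[L(W^2g^2) - 2W^2 gLg]$. Combining the Leibniz rule $L(hg) = hLg + gLh + 2\Gamma(h, g)$ for a diffusion with the chain rule $L(g^2) = 2gLg + 2\Gamma(g)$ yields
\[ L(W^2 g^2) - 2W^2 g Lg = L(W^2)g^2 + 2\Gamma(W^2, g^2) + 2W^2\Gamma(g) = 2\Gamma^W(g), \]
so that $\psi'(s) = 2P_s\Gamma^W(P_{t-s}f)$. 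Applying \eqref{truecond} pointwise to $P_{t-s}f$ and using positivity of $P_s$ then gives $\psi'(s) \geq 2\gamma\psi(s)$, and Gronwall yields $\psi(0) \leq e^{-2\gamma t}\psi(t)$, which is \eqref{multineq}. For the converse, I would set $h(t) = e^{2\gamma t}\Phi^W(P_tf) - P_t\Phi^W(f)$; this vanishes at $t = 0$ and is nonpositive for $t > 0$ by hypothesis, hence $h'(0) \leq 0$. The same Leibniz and chain-rule identities evaluated at $t = 0$ produce $h'(0) = 2\gamma\Phi^W(f) - 2\Gamma^W(f)$, from which \eqref{truecond} follows.

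For the sufficient condition, I would expand $\Gamma^W(f)$ using $L(W^2) = 2WLW + 2\Gamma(W)$ and $\Gamma(W^2, f^2) = 4Wf\Gamma(W, f)$ to obtain
\[ \Gamma^W(f) = (WLW + \Gamma(W))f^2 + 4Wf\Gamma(W, f) + W^2\Gamma(f), \]
and bound the cross term via \eqref{intro:gammacs} and Young's inequality: for any $\lambda > 0$, $4Wf\Gamma(W, f) \geq -2\lambda W^2\Gamma(f) - \tfrac{2}{\lambda}f^2\Gamma(W)$. Choosing $\lambda = 1/2$ exactly cancels the $W^2\Gamma(f)$ contribution and produces $\Gamma^W(f) \geq W^2 f^2\bigl(\tfrac{LW}{W} - 3\tfrac{\Gamma(W)}{W^2}\bigr)$, so that \eqref{truecond} follows from \eqref{suffcond}.

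The main technical obstacle is justifying the differentiation used to derive $\psi'(s) = 2P_s\Gamma^W(P_{t-s}f)$, which requires commuting $L$ with $P_s$ on the possibly non-smooth argument $W^2(P_{t-s}f)^2$. This is precisely the content of the technical assumption $f \in \cA_0(W)$, so the hypothesis absorbs the difficulty and the remaining manipulations are purely algebraic. The value $\lambda = 1/2$ is moreover the unique choice for which the $W^2\Gamma(f)$ term cancels, which explains the coefficient $3$ appearing in \eqref{suffcond} and suggests the criterion is sharp within this approach.
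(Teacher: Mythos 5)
Your proposal is correct and follows essentially the same route as the paper, which defers to \cite{RZ21} but sketches exactly this argument: the interpolation $s \mapsto P_s\Phi^W(P_{t-s}f)$ justified by the commutation hypothesis $f \in \cA_0(W)$, the chain rules \eqref{intro:chainrule1}--\eqref{intro:chainrule2}, and the Cauchy--Schwarz/Young estimate on the cross term producing the coefficient $3$ in \eqref{suffcond}. Your write-up simply supplies the details the paper leaves to the citation.
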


\begin{proof}
    The proof of the curvature condition and the sufficient condition was originally done for smooth euclidean Kolmogorov type operators and smooth $W$ in \cite{RZ21}, but a formal generalisation to arbitrary diffusions is straightforward; the main ingredients of the proof are the respective chain rules \eqref{intro:chainrule1} and \eqref{intro:chainrule2} for $L$ and $\Gamma$, and the Cauchy-Schwarz inequality \eqref{intro:gammacs} for $\Gamma$.

    The formal computations can be justified by assumption of \eqref{technical}; the proof involves differentiating $P_s(W^2(P_{t-s}f)^2)$ with respect to $s$, which yields 
    \[ \partial_sP_s(W^2(P_{t-s}f)^2) = LP_s(W^2(P_{t-s}f)^2) + P_s(\partial_s(W^2(P_{t-s}f)^2)) \] 
    and since $f \in \cA_0(W)$, it follows $L$ and $P_s$ commute. 
\end{proof}

Our next objective is the correspondence between the integrated version of the pointwise subcommutation \eqref{multineq}, that is the integrated $\Phi^W$-subcommutation, and the functional inequality appearing in the introduction. The main point is that, as usual in semigroup theory, differentiation at time $t = 0$ yields an equivalence with a functional inequality. 

\begin{theorem}\label{thm:intineq}
Let $0 \leq W \in C^\infty(X)$ and $f \in \cA_0(W)$. The integrated subcommutation
\begin{equation}\label{intineq}
    \int W^2(P_tf)^2 d\mu \leq e^{-2\gamma t} \int W^2f^2 d\mu
\end{equation}
holds if and only if
\begin{equation}\label{funcineq0}
    \int L(W^2)f^2d\mu \leq 2\int W^2\Gamma(f)d\mu - 2\gamma \int W^2f^2d\mu. 
\end{equation}
That is, the functional inequality \eqref{funcineq0} is equivalent to the weighted contractivity condition
\begin{equation}\label{contraction}
    \norm{P_tf}_{L^2(W^2d\mu)} = \norm{W P_tf}_{L^2(\mu)} \leq e^{-\gamma t} \norm{W f}_{L^2(\mu)} = e^{-\gamma t}\norm{f}_{L^2(W^2d\mu)}.
\end{equation}
\end{theorem}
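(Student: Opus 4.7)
The plan is to set up the time-dependent quantity $\Phi(t) = \int W^2(P_tf)^2 \, d\mu$ and observe that the integrated subcommutation \eqref{intineq} is precisely the statement $\Phi(t) \leq e^{-2\gamma t}\Phi(0)$; the reformulation as the weighted contractivity \eqref{contraction} is then immediate since $\lVert W P_tf\rVert_{L^2(\mu)}^2 = \Phi(t)$. The equivalence with the functional inequality \eqref{funcineq0} reduces, as is standard in the semigroup approach to functional inequalities, to the statement that a differential inequality is equivalent to its infinitesimal version at $t = 0$, once one has the reverse implication via Gr\"onwall.

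First I would compute $\Phi'(t)$. Using the commutation built into the definition \eqref{technical} of $\cA_0(W)$ to justify differentiation under the integral, together with $\partial_t P_t f = L P_t f$, I obtain
$$\Phi'(t) = 2\int W^2 (P_tf)(L P_tf) \, d\mu.$$
I then apply the defining identity \eqref{intro:gamma} of the carr\'e du champ to $g = P_t f$ in the form $2g\,Lg = L(g^2) - 2\Gamma(g)$, and invoke the symmetry \eqref{intro:symmetry} of $L$ (equivalently the integration by parts formula \eqref{intro:ibp} applied twice) to transfer $L$ from $(P_tf)^2$ onto $W^2$, yielding
$$\Phi'(t) = \int L(W^2)(P_tf)^2\, d\mu - 2\int W^2 \Gamma(P_tf)\, d\mu.$$

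For the forward implication, I would observe that \eqref{intineq} says $\Phi(t) \leq e^{-2\gamma t}\Phi(0)$ with equality at $t = 0$, so dividing by $t > 0$ and sending $t \to 0^+$ gives $\Phi'(0) \leq -2\gamma \Phi(0)$, which upon substituting the above expression at $t = 0$ is precisely the functional inequality \eqref{funcineq0}. For the reverse implication, I would apply \eqref{funcineq0} with $P_tf$ in place of $f$ to deduce $\Phi'(t) \leq -2\gamma \Phi(t)$; Gr\"onwall applied to $e^{2\gamma t}\Phi(t)$ then delivers \eqref{intineq}.

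The main technical obstacle is twofold. The first is rigorously justifying the computation of $\Phi'(t)$, which is exactly what the ad hoc class $\cA_0(W)$ was designed to permit: the commutation of $L$ with $P_s$ against $W^2(P_tf)^2$ is required both to differentiate under the integral and to use $\partial_t P_t f = L P_t f$ in the strong (rather than merely $L^2$) sense, and to apply the symmetry of $L$ against the test function $W^2$ (which is not itself compactly supported). The second, more delicate, issue arises in the reverse implication, where \eqref{funcineq0} must be applied to $P_tf$ rather than to $f \in \cA_0(W)$: the former need not lie in $\cA_0(W)$, so I would address this by extending \eqref{funcineq0} to a class stable under $(P_s)_{s\ge 0}$ via a density argument in $L^2(W^2 d\mu)$, or equivalently by differentiating $\Phi(t+s)$ at $s = 0$ and applying \eqref{funcineq0} only after reducing to the initial datum in $\cA_0(W)$.
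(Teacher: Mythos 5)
Your proposal is correct and follows essentially the same route as the paper: define $I(t)=\int W^2(P_tf)^2\,d\mu$, compute $I'(t)=\int L(W^2)(P_tf)^2\,d\mu-2\int W^2\Gamma(P_tf)\,d\mu$ via the chain rule and reversibility, then differentiate at $t=0$ for one direction and apply Gr\"onwall for the other. Your closing remarks about justifying the differentiation and about applying \eqref{funcineq0} to $P_tf$ are more careful than the paper's proof, which passes over these points silently, but they do not change the argument.
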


\begin{remark}
If $\gamma \neq 0$ then this property can be regarded as a weighted contraction property for the (nondiffusive) semigroup generated by $L + \gamma$.
\end{remark}

\begin{proof}
Define $I(t) = \int W^2(P_tf)^2d\mu$. With $g = P_tf$, 
\begin{align*} 
    I'(t) &= 2\int W^2gLgd\mu \\
    &= \int W^2L(g^2)d\mu - 2\int W^2\Gamma(g)d\mu \\
    &= \int L(W^2)g^2d\mu - 2\int W^2\Gamma(g)d\mu
\end{align*}
by the chain rule \eqref{intro:chainrule1} and reversibility \eqref{intro:symmetry}. If \eqref{intineq} holds then $I'(t) \leq -2\gamma I(t)$ which is \eqref{funcineq0} at $t = 0$, and conversely if \eqref{funcineq0} holds then $I'(t) \leq -2\gamma I(t)$ yields by Gronwall's lemma $I(t) \leq e^{-2\gamma t}I(0)$ which is \eqref{intineq}. 
\end{proof}

The connection between \eqref{multineq} and \eqref{intineq} is that \eqref{multineq} is clearly sufficient, but perhaps not necessary, for \eqref{intineq}. Thus as mentioned earlier in the introduction, the sufficient condition \eqref{suffcond} sequentially verifies the curvature condition \eqref{truecond}, the pointwise subcommutation \eqref{multineq}, the integrated subcommutation \eqref{intineq}, and finally the functional inequality \eqref{funcineq0}.

\subsection{The abstract Hardy inequality}
We shall now specialise to the case $\gamma = 0$ and show how the functional inequality \eqref{funcineq0} gives rise to a Hardy inequality. Note the form of \eqref{funcineq0} already has the ``correct" order in the sense $L(W^2)$ is two orders lower than $W^2$. We start with two lemmata. The first is a simple transformation of \eqref{funcineq0} and the second is a sufficient geometric condition to verify the first. 

\begin{lemma}\label{lem:funcineq}
Let $f \in \cA_0(W)$ and $0 \leq W \in C^\infty(X)$ verify \eqref{multineq} with constant $\gamma = 0$. Then 
\begin{equation}\label{funcineq}
    \int \frac{\Gamma(W)}{W^2}f^2d\mu \leq \int \Gamma(f)d\mu
\end{equation}
and more generally if $\beta \in \bbR$ then
\begin{equation}\label{funcineqgeneral}
    (1-\beta)\int W^{1-2\beta}LW f^2d\mu + (\beta^2-\beta+1)\int W^{-2\beta}\Gamma(W)f^2d\mu \leq \int W^{2-2\beta}\Gamma(f)d\mu.     
\end{equation}
\end{lemma}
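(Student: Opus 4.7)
The plan is to observe that \eqref{funcineq} is the special case $\beta = 1$ of \eqref{funcineqgeneral}: when $\beta=1$, the prefactor $(1-\beta)$ kills the $LW$ term, $(\beta^2-\beta+1)=1$, $W^{-2\beta}=W^{-2}$, and $W^{2-2\beta}=1$, leaving exactly \eqref{funcineq}. So the real task is \eqref{funcineqgeneral}. By Theorem~\ref{thm:intineq}, the hypothesis \eqref{multineq} (integrated against $\mu$) is equivalent to the functional inequality $\int L(W^2) f^2 d\mu \leq 2\int W^2 \Gamma(f) d\mu$, and the key step is to apply this with $W^{-\beta}f$ in place of $f$, which belongs to $C_c^\infty(X')$ since $W, W^{-1} > 0$ are smooth on $X'$.

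On the left-hand side, the chain rule \eqref{intro:chainrule1} with $\phi(x)=x^2$ gives $L(W^2) = 2W\,LW + 2\Gamma(W)$, so the LHS becomes
\[ 2\int W^{1-2\beta}LW\,f^2 d\mu + 2\int W^{-2\beta}\Gamma(W)f^2 d\mu. \]
On the right-hand side, the derivation property and chain rule \eqref{intro:chainrule2} for $\Gamma$ expand
\[ \Gamma(W^{-\beta}f) = W^{-2\beta}\Gamma(f) - 2\beta W^{-2\beta-1}f\Gamma(W,f) + \beta^2 W^{-2\beta-2}f^2\Gamma(W), \]
producing an RHS of $2\int W^{2-2\beta}\Gamma(f)d\mu - 4\beta \int W^{1-2\beta}f\Gamma(W,f)d\mu + 2\beta^2\int W^{-2\beta}\Gamma(W)f^2 d\mu$.

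The main obstacle is the mixed term $-4\beta\int W^{1-2\beta} f\Gamma(W,f)d\mu$, which I plan to handle by first writing $2f\Gamma(W,f) = \Gamma(W, f^2)$ and then pulling the weight inside $\Gamma$: for $\beta \neq 1$, the chain rule \eqref{intro:chainrule2} gives $W^{1-2\beta}\Gamma(W,f^2) = \frac{1}{2-2\beta}\Gamma(W^{2-2\beta}, f^2)$. Integration by parts \eqref{intro:ibp} (justified since $f^2 \in C_c^\infty(X')$) then turns this into $-\int f^2 L(W^{2-2\beta})d\mu/(2-2\beta)$, and expanding $L(W^{2-2\beta}) = (2-2\beta)W^{1-2\beta}LW + (2-2\beta)(1-2\beta)W^{-2\beta}\Gamma(W)$ via \eqref{intro:chainrule1} yields two terms with exactly the weights $W^{1-2\beta}LW$ and $W^{-2\beta}\Gamma(W)$ already present. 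Collecting coefficients and dividing by $2$ gives $1-\beta$ in front of $\int W^{1-2\beta}LW f^2 d\mu$ and
\[ (1-\beta^2) - \beta(1-2\beta) = 1 - \beta + \beta^2 = \beta^2 - \beta + 1 \]
in front of $\int W^{-2\beta}\Gamma(W)f^2 d\mu$, which is precisely \eqref{funcineqgeneral}. The exceptional case $\beta=1$ is either a limit of the above or may be treated directly by using $L(\log W) = LW/W - \Gamma(W)/W^2$ in place of $L(W^{2-2\beta})/(2-2\beta)$, recovering \eqref{funcineq} immediately.
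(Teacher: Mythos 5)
Your proof is correct and follows essentially the same route as the paper: substitute $f/W^\beta$ into the functional inequality from Theorem~\ref{thm:intineq}, expand $\Gamma(W^{-\beta}f)$, and integrate the cross term by parts via $L(W^{2-2\beta})$ (or $L(\log W)$ when $\beta=1$). The only cosmetic difference is that the paper proves \eqref{funcineq} first as a standalone computation with $f/W$ and the logarithm, whereas you fold it in as the $\beta=1$ case of \eqref{funcineqgeneral}; the algebra and the coefficient $\beta^2-\beta+1$ come out identically.
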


\begin{proof}
For the first part, if $W$ verifies \eqref{multineq}, then it verifies \eqref{intineq} with the same constant by integrating with respect to $\mu$ and therefore 
\[ \int L(W^2)f^2d\mu \leq 2\int W^2\Gamma(f)d\mu - 2\gamma\int W^2f^2d\mu \]
by Theorem \ref{thm:intineq}. Replacing $f$ with $f/W$, and integrating by parts the cross term in the expansion of $\Gamma(f/W) = W^{-2}\Gamma(f) + 2W^{-1}f\Gamma(W^{-1}, f) + \Gamma(W^{-1})f^2$ we find 
\begin{align*}
    2\int W^2 \cdot W^{-1}\Gamma(W^{-1}, f)d\mu = -\int\Gamma(\log W, f^2)d\mu &= \int L(\log W)f^2d\mu \vphantom{\int \left(\frac{LW}{W} - \frac{\Gamma W}{W^2}\right)f^2d\mu} \\
    &= \int \left(\frac{LW}{W} - \frac{\Gamma(W)}{W^2}\right)f^2d\mu. 
\end{align*}
Hence 
\begin{align*}
    2\int \left(\frac{LW}{W} + \frac{\Gamma(W)}{W^2}\right)f^2d\mu &\leq 2\int \Gamma(f)d\mu + 2\int W^2\Gamma(W^{-1})f^2d\mu \\
    &+ 2\int \left(\frac{LW}{W} - \frac{\Gamma(W)}{W^2}\right)f^2d\mu.
\end{align*}
Since $W^2\Gamma(W^{-1}) = W^{-2}\Gamma(W)$, simplifying gives the result. 

For the second part, replacing $f$ with $f/W^\beta$ for some $\beta \neq 1$ instead, the integration by parts gives 
\[ \int \left(\frac{L(W^2)}{W^{2\beta}} + \frac{2\beta}{2\beta-2} L(W^{2-2\beta}) - 2\beta^2 \frac{\Gamma(W)}{W^{2\beta}}\right)f^2d\mu \leq 2 \int W^{2-2\beta}\Gamma(f)d\mu \]
which simplifies to \eqref{funcineqgeneral}. 
\end{proof}

\begin{lemma}\label{lem:suffcond}
Let $0 \leq \psi \in C^\infty(X)$ verify
\begin{equation}\label{qcond} 
    L\psi = \frac{Q - 1}{\psi}\Gamma(\psi)
\end{equation}
for some $Q \neq 2$. Then $W = \psi^p$ verifies \eqref{multineq} with constant $\gamma = 0$ whenever $p$ is between $0$ and $\frac{Q - 2}{2} \in \bbR_{\neq 0}$.
\end{lemma}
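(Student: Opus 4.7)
The plan is to directly verify the sufficient condition \eqref{suffcond} with $\gamma = 0$ for the weight $W = \psi^p$. By Theorem \ref{thm:multineq}, \eqref{suffcond} implies the curvature condition \eqref{truecond}, which in turn implies the pointwise subcommutation \eqref{multineq}, so this is enough.

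To verify \eqref{suffcond}, I would apply the chain rules \eqref{intro:chainrule1} and \eqref{intro:chainrule2} to expand
\[ L(\psi^p) = p\psi^{p-1}L\psi + p(p-1)\psi^{p-2}\Gamma(\psi), \qquad \Gamma(\psi^p) = p^2\psi^{2p-2}\Gamma(\psi), \]
and then divide by $W = \psi^p$ and $W^2 = \psi^{2p}$ respectively. Substituting the hypothesis \eqref{qcond} in the form $L\psi/\psi = (Q-1)\Gamma(\psi)/\psi^2$ eliminates $L\psi$ and leaves
\[ \frac{LW}{W} - 3\frac{\Gamma(W)}{W^2} = \bigl(p(Q-1) + p(p-1) - 3p^2\bigr)\frac{\Gamma(\psi)}{\psi^2} = 2p\left(\frac{Q-2}{2} - p\right)\frac{\Gamma(\psi)}{\psi^2}. \]
Since $\Gamma(\psi) \geq 0$, this quantity is nonnegative precisely when $p$ and $(Q-2)/2 - p$ have the same sign, i.e.\ when $p$ lies in the closed interval between $0$ and $(Q-2)/2$; the assumption $Q \neq 2$ ensures this interval is nondegenerate.

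There is essentially no obstacle here beyond bookkeeping: the argument reduces the claim to a quadratic inequality in $p$, and the factorisation displays the two roots that bound the admissible range. The only mild technical point is to check that $W = \psi^p$ remains a valid weight on $X'$ (smooth and positive), which is inherited from the corresponding properties of $\psi$ wherever $\psi > 0$.
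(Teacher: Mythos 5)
Your proposal is correct and follows essentially the same route as the paper: both verify the sufficient condition \eqref{suffcond} with $\gamma = 0$ by applying the chain rules \eqref{intro:chainrule1} and \eqref{intro:chainrule2} to $W = \psi^p$, substituting \eqref{qcond}, and checking that $p(Q-1) + p(p-1) - 3p^2$ is nonnegative for $p$ between $0$ and $(Q-2)/2$. Your explicit factorisation $2p\bigl(\frac{Q-2}{2} - p\bigr)$ just makes the sign analysis slightly more transparent than the paper's one-line assertion.
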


\begin{proof}
It suffices to show $W = \psi^p$ verifies \eqref{suffcond} by Theorem \ref{thm:multineq}. Note since $L$ is a differential operator and $\psi$ is smooth almost everywhere, $L\psi$ is defined almost everywhere. By \eqref{intro:chainrule1} and \eqref{intro:chainrule2}, 
\[ \frac{LW}{W} - 3\frac{\Gamma(W)}{W^2} = (p(Q - 1) + p(p - 1) - 3p^2)\frac{\Gamma(\psi)}{\psi^2} \]
and clearly $p(p + Q - 2) - 3p^2$ is nonnegative whenever $p$ is between $0$ and $\frac{Q - 2}{2}$.
\end{proof}

\begin{remark}
If $Q = 2$ then clearly $p(p + Q - 2) - 3p^2 = -2p^2$ is nonnegative if and only if $p = 0$, but of course this recovers the trivial multiplier $W \equiv 1$. It may well be the case $\Gamma(\psi)\psi^{-2}$ is bounded, in which case $W$ verifies \eqref{suffcond} with constant $\gamma = -2p^2\sup\Gamma(\psi)\psi^{-2}$, but in this paper we are typically interested in the case $\gamma = 0$ and with choices of $\psi$ for which it will typically be the case $\Gamma(\psi)\psi^{-2}$ is unbounded.
\end{remark}

Our next result concerns the Hardy inequality in both its usual high-dimensional form and its critical $2$-dimensional form with logarithmic correction. 

\begin{theorem}\label{mainthm}
Let $0 \leq \psi \in C^\infty(X)$ verify \eqref{qcond} for some $Q \in \bbR$. If $Q \neq 2$ then the Hardy inequalities
\begin{equation}\label{hardy}
    \int \frac{\Gamma(\psi)}{\psi^2}f^2d\mu \leq \left(\frac{Q-2}{2}\right)^2 \int \Gamma(f)d\mu
\end{equation}
and more generally for $Q + \alpha \neq 2$
\begin{equation}\label{hardygeneral}
    \int \psi^\alpha \frac{\Gamma(\psi)}{\psi^2}f^2d\mu \leq \left(\frac{2}{Q + \alpha - 2}\right)^2 \int \psi^\alpha \Gamma(f)d\mu
\end{equation}
holds for all $f \in \cA_0(\psi^{(Q-2)/2})$. If $Q = 2$, either $\psi \leq 1$ or $\psi \geq 1$, and $\log^{-1/2}\psi \in C^\infty(X)$ then the Hardy inequalities
\begin{equation}\label{loghardy}
    \int \frac{\Gamma(\psi)}{\psi^2\log^2\psi}f^2d\mu \leq 4 \int \Gamma(f)d\mu
\end{equation}
and more generally for $\alpha \neq 1$
\begin{equation}\label{loghardygeneral}
    \int \log^\alpha \psi \frac{\Gamma(\psi)}{\psi^2 \log^2\psi} f^2d\mu \leq \left(\frac{2}{\alpha-1}\right)^2 \int \log^\alpha \psi \Gamma(f)d\mu
\end{equation}
holds for all $f \in \cA_0(\log^{-1/2}\psi)$ and where $\log^p\psi \vcentcolon= \abs{\log \psi}^p$. 
\end{theorem}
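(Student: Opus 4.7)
The plan is to handle the subcritical case $Q \neq 2$ by a direct application of Lemmata \ref{lem:suffcond} and \ref{lem:funcineq}, and then reduce the critical case $Q = 2$ to the subcritical one via the logarithmic substitution $\phi = |\log\psi|$, which I expect to satisfy \eqref{qcond} with effective parameter $Q' = 1$.

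For the first Hardy inequality \eqref{hardy} when $Q \neq 2$, I would take $p = (Q-2)/2$ and set $W = \psi^p$. Lemma \ref{lem:suffcond} immediately gives that $W$ verifies \eqref{multineq} with $\gamma = 0$, so Lemma \ref{lem:funcineq} applies and yields $\int \Gamma(W)/W^2\, f^2\, d\mu \leq \int \Gamma(f)\, d\mu$. The chain rule for $\Gamma$ reduces $\Gamma(W)/W^2$ to $p^2\, \Gamma(\psi)/\psi^2$, so rearrangement gives the claimed bound. For the weighted version \eqref{hardygeneral} I would invoke \eqref{funcineqgeneral} with the same $W = \psi^{(Q-2)/2}$ and with $\beta = 1 - \alpha/(Q-2)$, which is the unique choice making $W^{2-2\beta} = \psi^\alpha$. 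Substituting $L\psi = (Q-1)\Gamma(\psi)/\psi$ from \eqref{qcond}, both $W^{1-2\beta}LW$ and $W^{-2\beta}\Gamma(W)$ reduce to constant multiples of $\psi^{\alpha-2}\Gamma(\psi)$, and the combined coefficient on the left, after a short simplification using $2p\beta = 2p - \alpha$, factors as the perfect square $((Q+\alpha-2)/2)^2$; this is where the hypothesis $Q + \alpha \neq 2$ is essential, as it is exactly what allows dividing through to produce the sharp constant.

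For the critical case $Q = 2$, the starting observation is that $L\psi = \Gamma(\psi)/\psi$ together with the chain rules \eqref{intro:chainrule1} and \eqref{intro:chainrule2} give $L(\log\psi) = L\psi/\psi - \Gamma(\psi)/\psi^2 = 0$ and $\Gamma(\log\psi) = \Gamma(\psi)/\psi^2$. Therefore $\phi = |\log\psi|$ is nonnegative thanks to the hypothesis $\psi \leq 1$ or $\psi \geq 1$, smooth almost everywhere thanks to the assumption $\log^{-1/2}\psi \in C^\infty(X)$, and verifies \eqref{qcond} with $Q' = 1 \neq 2$. Applying the just-proved \eqref{hardy} and \eqref{hardygeneral} to $\phi$ in place of $\psi$, on the working class $\cA_0(\phi^{(Q'-2)/2}) = \cA_0(\log^{-1/2}\psi)$, and using $\Gamma(\phi)/\phi^2 = \Gamma(\psi)/(\psi^2 \log^2\psi)$ together with $\phi^\alpha = \log^\alpha \psi$, then translates directly into \eqref{loghardy} and \eqref{loghardygeneral}, with constants $(2/(Q'-2))^2 = 4$ and $(2/(Q' + \alpha - 2))^2 = (2/(\alpha-1))^2$ respectively.

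The main obstacle I anticipate is the coefficient bookkeeping for \eqref{hardygeneral}: one has to verify that the expression $(1-\beta)p(Q+p-2) + (\beta^2-\beta+1)p^2$ appearing on the left of \eqref{funcineqgeneral} collapses to a perfect square precisely at $p = (Q-2)/2$, and one also has to check that the substitution $f \mapsto f/W^\beta$ used inside Lemma \ref{lem:funcineq} preserves membership in the working class $\cA_0(W)$ so that the integration by parts there remains legal. Apart from these, the proof is a chain of routine applications of the chain rules for $L$ and $\Gamma$.
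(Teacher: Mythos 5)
Your proposal is correct and follows essentially the same route as the paper: Lemma \ref{lem:suffcond} plus Lemma \ref{lem:funcineq} with $W=\psi^{(Q-2)/2}$ and $\beta$ chosen via $\alpha=(Q-2)(1-\beta)$ for the case $Q\neq 2$, and the substitution $\Phi=\log\psi$ (verifying \eqref{qcond} with $Q'=1$) to reduce the critical case to the subcritical one. Your coefficient check that the left-hand side of \eqref{funcineqgeneral} collapses to $p^2(2-\beta)^2=\left(\frac{Q+\alpha-2}{2}\right)^2$ is accurate, and your use of $\abs{\log\psi}$ anticipates the sign issue the paper defers to its subsequent remark and section.
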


The requirement that $f$ be supported away from the singularities of the weight on the left hand side, implicit in the definition of $\cA_0(W) \sbs \cA_0 = C_c^\infty(X') \sbs C_c^\infty(X)$, can be removed, for instance if $\Gamma(\psi)\psi^{-2}$ is locally integrable near its singularities. However, since we are interested in Hardy inequalities with various weights on either side, to simplify the exposition we give $f$ enough regularity to sidestep such considerations. Note as usual the inequalities above hold for $f$ in a larger space by standard density arguments. 

To motivate the expression \eqref{hardy}, the prototypical example is that of $L = \Delta$ the euclidean laplacian on $\bbR^m$ whose carr\'e du champ $\Gamma(f) = \abs{\nabla f}^2$ is the square of the euclidean gradient and reversible measure $d\mu$ is Lebesgue measure $dx$ on $\bbR^m$. If $\psi = \abs{x}$ is the euclidean norm and $m \geq 3$, then $\psi$ verifies \eqref{qcond} with constant $Q = m$, and \eqref{hardy} is then precisely the sharp Hardy inequality on $\bbR^m$ while \eqref{loghardy} is the sharp Hardy inequality with logarithmic correction on $\bbR^2$. In general, $Q$ represents a ``homogeneous dimension" of sorts which we elucidate in the examples to follow. Note although the constant $(\frac{2}{Q - 2})^2$ is not optimal in general, it turns out that it is optimal in a number of generic cases. (This may be somewhat surprising given the starting point is a sufficient condition for weighted contractivity.)

\begin{proof} 
If $Q \neq 2$, by Lemma \ref{lem:suffcond} the multiplier $W = \psi^{(Q-2)/2} \in C^\infty(X)$ verifies \eqref{multineq} with constant $\gamma = 0$, and by the first part of Lemma \ref{lem:funcineq} it holds
\[ \int \frac{\Gamma(\psi)}{\psi^2}f^2d\mu \leq \left(\frac{2}{Q - 2}\right)^2 \int \Gamma(f)d\mu \]
which is \eqref{hardy}. If $Q = 2$ define $\Phi = \log \psi$. Then 
\[ L\Phi = \frac{1}{\psi}L\psi - \frac{\Gamma(\psi)}{\psi^2} = 0, \]
i.e. $\Phi$ verifies \eqref{qcond} with constant $Q = 1$. Since $W = (\log \psi)^{(Q-2)/2} = (\log \psi)^{-1/2} \in C^\infty(X)$ and noting $\Gamma(\Phi) = \Gamma(\psi)\psi^{-2}$ we see that \eqref{loghardy} follows from \eqref{hardy}. Similarly, the second part of Lemma \ref{lem:funcineq} gives \eqref{hardygeneral} and \eqref{loghardygeneral} for $\beta$ satisfying $\alpha = (Q - 2)(1 - \beta)$ and $\alpha = \beta - 1$ respectively.
\end{proof}

\begin{remark}
Note since $\Phi < 0$ in the region $\{\psi \leq 1\}$, the case $Q = 2$ is more delicate than the previous proof suggests. We leave this discussion to the final section of the chapter. 
\end{remark}

One advantage of proving the Hardy inequality in this way is that the arguments are easily adapted to the case where only a one-sided laplacian comparison holds, that is when only an upper bound or lower bound on $L\psi$ is given. In particular, the former occurs naturally in the setting of riemannian manifolds $(M, g)$ with nonnegative Ricci curvature as a consequence of the celebrated laplacian comparison theorem. 

\begin{corollary}\label{cor:lower}
Let $0 \leq \psi \in C^\infty(X)$ verify \eqref{qcond} from below, i.e. $\psi$ verifies
\begin{equation}\label{qcondlower}
    L\psi \geq \frac{Q - 1}{\psi}\Gamma(\psi)
\end{equation}
for $Q > 2$. Then \eqref{hardygeneral} holds for any $\alpha \geq 0$.
\end{corollary}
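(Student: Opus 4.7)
The plan is to bypass the semigroup machinery entirely and prove \eqref{hardygeneral} directly from the one-sided comparison \eqref{qcondlower} by pairing it against the test function $\psi^{\alpha-1}f^2$ and absorbing the cross-term via Cauchy--Schwarz. Since $f \in \cA_0(\psi^{(Q-2)/2}) \subset C_c^\infty(X')$, on $\supp f$ both $\psi$ and $\psi^{-1}$ are smooth and strictly positive, so $\psi^{\alpha-1}f^2 \geq 0$ is a legitimate test function and the integration-by-parts identity \eqref{intro:ibp} is fully justified. Since $Q > 2 \geq 1$, both sides of \eqref{qcondlower} are nonnegative on $\supp f$, so multiplying by the nonnegative quantity $\psi^{\alpha-1}f^2$ preserves the inequality after integrating against $\mu$.

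First I would compute, using \eqref{intro:ibp}, \eqref{intro:chainrule2}, and the Leibniz rule for $\Gamma$,
\begin{align*}
(Q-1)\int \psi^{\alpha-2}f^2\Gamma(\psi)\,d\mu &\leq \int \psi^{\alpha-1}f^2 L\psi\,d\mu \\
&= -\int \Gamma(\psi^{\alpha-1}f^2,\psi)\,d\mu \\
&= -(\alpha-1)\int \psi^{\alpha-2}f^2\Gamma(\psi)\,d\mu - 2\int \psi^{\alpha-1}f\,\Gamma(f,\psi)\,d\mu,
\end{align*}
and then collect terms to obtain
\begin{equation*}
(Q+\alpha-2)\int \psi^{\alpha-2}f^2\Gamma(\psi)\,d\mu \leq -2\int \psi^{\alpha-1}f\,\Gamma(f,\psi)\,d\mu.
\end{equation*}
Next, I would estimate the right-hand side by the pointwise Cauchy--Schwarz inequality \eqref{intro:gammacs}, $|\Gamma(f,\psi)| \leq \Gamma(f)^{1/2}\Gamma(\psi)^{1/2}$, followed by Cauchy--Schwarz for integrals:
\begin{equation*}
\left|2\int \psi^{\alpha-1}f\,\Gamma(f,\psi)\,d\mu\right| \leq 2\left(\int \psi^\alpha \Gamma(f)\,d\mu\right)^{1/2}\left(\int \psi^{\alpha-2}f^2\Gamma(\psi)\,d\mu\right)^{1/2}.
\end{equation*}
Since $Q+\alpha-2 > 0$ by the hypotheses $Q>2$ and $\alpha \geq 0$, I would divide through by $\bigl(\int \psi^{\alpha-2}f^2\Gamma(\psi)\,d\mu\bigr)^{1/2}$ (the conclusion is trivial when this quantity vanishes) and square to recover \eqref{hardygeneral}.

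The main obstacle is that the argument used in Theorem \ref{mainthm}---matching equality in \eqref{qcond} so as to land exactly on the sufficient condition \eqref{suffcond}---is no longer available, so one must carefully account for the signs of the error terms produced by integration by parts. The point is that all of these error terms have definite sign governed by $\alpha$ and $Q$ through the combination $Q+\alpha-2$, which is precisely why the hypothesis $\alpha \geq 0$ (together with $Q>2$) is sufficient. Crucially, no upper bound on $L\psi$ is ever invoked, which is what makes the argument compatible with the Ricci-nonnegative setting where only a one-sided laplacian comparison is available through the laplacian comparison theorem.
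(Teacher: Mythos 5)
Your proof is correct, but it takes a genuinely different route from the paper. The paper proves Corollary \ref{cor:lower} by rerunning its own machinery under a one-sided hypothesis: the lower bound \eqref{qcondlower} furnishes a lower bound on $L\psi^p$ for $p>0$, hence verifies the sufficient condition \eqref{suffcond} with $\gamma=0$ for $W=\psi^{(Q-2)/2}$ (forcing $Q>2$); this yields the functional inequality \eqref{funcineq0}, and then the substitution $f\mapsto f/W^\beta$ of Lemma \ref{lem:funcineq} produces \eqref{funcineqgeneral}, whose left-hand side can still be bounded below precisely when $1-\beta>0$, i.e.\ $\alpha=(Q-2)(1-\beta)\geq 0$. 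You instead pair \eqref{qcondlower} directly against $\psi^{\alpha-1}f^2$, integrate by parts once, and absorb the single cross term $-2\int\psi^{\alpha-1}f\,\Gamma(f,\psi)\,d\mu$ by Cauchy--Schwarz; the sign condition $Q+\alpha-2>0$ appears at the division step. Your computation checks out (the Leibniz expansion, the collection of terms, and the two-stage Cauchy--Schwarz are all correct), and the argument is self-contained: it never touches $\Gamma^W$, the subcommutation, or Lemma \ref{lem:funcineq}. What the paper's route buys is the semigroup interpretation --- the Hardy inequality is exhibited as a consequence of the weighted contractivity \eqref{contraction}, which is the point of the whole framework --- and an exact (non-Cauchy--Schwarz) treatment of the cross term via the substitution $f\mapsto f/W^\beta$, at the cost of the technical assumption $f\in\cA_0(\psi^{(Q-2)/2})$. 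What your route buys is brevity and a weaker hypothesis on $f$: your argument needs only $f\in C_c^\infty(X')$ and no commutation of $L$ with $P_s$, which is exactly the observation the paper itself makes in \hyperref[s3.4]{\S3.4}. Two cosmetic remarks: the nonnegativity of both sides of \eqref{qcondlower} is not needed to multiply by $\psi^{\alpha-1}f^2\geq 0$ and integrate; and when you divide by $\bigl(\int\psi^{\alpha-2}f^2\Gamma(\psi)\,d\mu\bigr)^{1/2}$ you should note this quantity is finite because $\supp f$ is compact in $X'$, where $\psi$, $\psi^{-1}$, and $\Gamma(\psi)$ are smooth and bounded.
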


\begin{corollary}\label{cor:upper}
Let $0 \leq \psi \in C^\infty(X)$ verify \eqref{qcond} from above, i.e. $\psi$ verifies
\begin{equation}\label{qcondupper}
    L\psi \leq \frac{Q - 1}{\psi}\Gamma(\psi)
\end{equation}
for some $Q < 2$. Then \eqref{hardygeneral} holds for any $\alpha \leq 0$.
\end{corollary}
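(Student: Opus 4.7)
The plan is to mirror the final part of the proof of Theorem \ref{mainthm}, with careful attention to the reversed sign conventions forced by $Q<2$. The natural multiplier is still $W=\psi^{(Q-2)/2}$, and the corollary will follow once I re-verify that the sufficient condition \eqref{suffcond} survives when \eqref{qcond} is weakened to the one-sided inequality \eqref{qcondupper}, and then show that the auxiliary $L\psi$-term appearing when one passes to \eqref{hardygeneral} via the second part of Lemma \ref{lem:funcineq} carries the favourable sign in the regime $\alpha\le 0$.

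First, I would compute, exactly as in the proof of Lemma \ref{lem:suffcond},
\[ \frac{LW}{W}-3\frac{\Gamma(W)}{W^2}=p\frac{L\psi}{\psi}+(p(p-1)-3p^2)\frac{\Gamma(\psi)}{\psi^2} \]
with $p=(Q-2)/2$. Since $Q<2$ we have $p<0$, so multiplying \eqref{qcondupper} by $p$ reverses the inequality to $p\frac{L\psi}{\psi}\ge p(Q-1)\frac{\Gamma(\psi)}{\psi^2}$; substituting then yields the lower bound $p(Q-2-2p)\frac{\Gamma(\psi)}{\psi^2}$, which vanishes at $p=(Q-2)/2$. Hence \eqref{suffcond} holds with $\gamma=0$, and Theorems \ref{thm:multineq} and \ref{thm:intineq} together supply \eqref{funcineq0} with $\gamma=0$.

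Second, I would invoke the second part of Lemma \ref{lem:funcineq}, which yields \eqref{funcineqgeneral}. The only remaining $L\psi$-dependent contribution on the left is the term $(1-\beta)\int W^{1-2\beta}LW\,f^2d\mu$; expanding $LW$ via \eqref{intro:chainrule1} carries $L\psi$ with coefficient proportional to $p(1-\beta)$. Parameterising $\alpha=(Q-2)(1-\beta)$ as in the proof of Theorem \ref{mainthm}, the hypothesis $\alpha\le 0$ together with $Q-2<0$ forces $1-\beta\ge 0$, so $p(1-\beta)\le 0$. Replacing $L\psi$ by its upper bound $(Q-1)\Gamma(\psi)/\psi$ from \eqref{qcondupper} therefore \emph{strengthens} the left-hand side of \eqref{funcineqgeneral}, and the coefficients then collapse exactly as in the final computation of the proof of Theorem \ref{mainthm} to the constant $\lrpar{\tfrac{Q+\alpha-2}{2}}^2$ on the $\Gamma(\psi)/\psi^2$-term, which rearranges to \eqref{hardygeneral}.

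The main obstacle is pure sign bookkeeping, carried out in two places: first in passing from \eqref{qcondupper} to \eqref{suffcond}, and then in eliminating the residual $L\psi$ contribution to \eqref{funcineqgeneral}. Both steps close precisely because $p<0$ and $p(1-\beta)\le 0$ under the hypotheses $Q<2$ and $\alpha\le 0$; these sign constraints are exactly what pin down the range of $\alpha$ appearing in the statement, and they mirror the opposite-sign constraints $p>0$, $p(1-\beta)\ge 0$ that close the analogous argument for Corollary \ref{cor:lower}.
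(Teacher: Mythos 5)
Your argument is correct and follows essentially the same route as the paper, which treats this corollary together with Corollary \ref{cor:lower} by noting that the one-sided bound on $L\psi$ still yields \eqref{suffcond} with $\gamma=0$ because the coefficient of $L\psi$ in $L\psi^p$ has the right sign for $p=(Q-2)/2<0$, and that the residual $L\psi$-term in \eqref{funcineqgeneral} can be discarded in the correct direction since its coefficient is proportional to $p(1-\beta)\le 0$ when $\alpha\le 0$. Your sign bookkeeping in both places matches the paper's (terser) proof exactly.
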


Note the above corollaries are silent on the case $Q = 2$. In fact, the logarithmic Hardy inequality \eqref{loghardy} can be recovered in this case, but we defer its proof to the next section. 

\begin{proof}
We assume we are given a lower bound, the upper bound being similar. In verifying \eqref{suffcond}, note a lower bound \eqref{qcondlower} on $L\psi$ furnishes a lower bound on $L\psi^p$ whenever $p > 0$. But since $p = \frac{Q - 2}{2}$, this implies $Q > 2$. In fact, $p$ must be between $0$ and $\frac{Q - 2}{2}$ as far as replicating the proof of Theorem \ref{mainthm} is concerned, so $Q > 2$ is necessary. This recovers \eqref{funcineq} and \eqref{funcineqgeneral} by Lemma \ref{lem:funcineq}, since if $\beta < 1$, in which case $\alpha = (Q-2)(1-\beta) \geq 0$, then the left hand side of \eqref{funcineqgeneral} can be bounded below; the rest of the proof follows like in the equality case. 
\end{proof}

Returning to the case of equality, let us emphasise that the more important fact from the viewpoint of semigroup theory is that the Hardy inequalities here follow from the weighted contraction property \eqref{contraction}. Reframed to emphasise the connection with semigroups, the theorems given in this section may be reformulated as follows. 

\begin{theorem}\label{thm:contraction0}
Let $0 \leq \psi \in C^\infty(X)$ verify \eqref{qcond} with constant $Q \neq 2$. Then the semigroup $(P_t)_{t \geq 0}$ satisfies the weighted contraction property 
\[ \norm{\psi^{(Q - 2)/2}P_tf}_{L^2(\mu)} \leq \norm{\psi^{(Q - 2)/2}f}_{L^2(\mu)} \]
if and only if
\[ \int \psi^{Q-2}\frac{\Gamma(\psi)}{\psi^2}f^2d\mu \leq \left(\frac{1}{Q-2}\right)^2 \int \psi^{Q-2}\Gamma(f)d\mu \]
if and only if for $Q + \alpha \neq 2$
\[ \int \psi^\alpha \frac{\Gamma(\psi)}{\psi^2}f^2d\mu \leq \left(\frac{2}{Q + \alpha - 2}\right)^2 \int \psi^\alpha \Gamma(f)d\mu \] 
holds for all $f \in \cA_0(\psi^{(Q-2)/2})$. 
\end{theorem}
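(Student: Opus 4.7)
The plan is to identify each of the three statements with a form of the $\gamma = 0$ functional inequality \eqref{funcineq0} applied to the weight $W := \psi^{(Q-2)/2}$. By definition of the $L^2(W^2 d\mu)$ norm, the weighted contraction property is precisely \eqref{intineq} for this $W$ and $\gamma = 0$, and Theorem \ref{thm:intineq} identifies this in turn with the $\gamma = 0$ case of \eqref{funcineq0}, namely
\[ \int L(\psi^{Q-2}) f^2 d\mu \leq 2 \int \psi^{Q-2} \Gamma(f) d\mu. \]

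Next, I would use the hypothesis \eqref{qcond} to evaluate the left-hand side. Applying the chain rule \eqref{intro:chainrule1} to $\phi(\psi) = \psi^{Q-2}$ and substituting $L\psi = (Q-1)\Gamma(\psi)/\psi$ gives $L(\psi^{Q-2}) = 2(Q-2)^2 \psi^{Q-4}\Gamma(\psi)$. Since $Q \neq 2$, dividing through by $2(Q-2)^2$ converts the displayed inequality into the second statement of the theorem, establishing the equivalence of the contraction property and the specific-weight Hardy inequality.

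For the equivalence with the $\alpha$-family, one direction is immediate: specialising to $\alpha = Q - 2$, which is admissible because $Q + \alpha = 2Q - 2 \neq 2$, recovers the middle statement. Conversely, starting from the already-established \eqref{funcineq0}, I would run the substitution $f \to f/W^\beta$ used in the second half of the proof of Lemma \ref{lem:funcineq} to obtain \eqref{funcineqgeneral}. Then computing $LW$ and $\Gamma(W)$ from \eqref{qcond} via the chain rule and simplifying the numerical coefficient $3(1-\beta) + (\beta^2-\beta+1) = (\beta-2)^2$ together with $p^2 = ((Q-2)/2)^2$ yields the third statement with $\alpha = (Q-2)(1-\beta)$ and constant $4/(Q+\alpha-2)^2$. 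The forbidden value $\beta = 2$ corresponds exactly to $Q + \alpha = 2$; as $\beta$ ranges over $\bbR \setminus \{2\}$, so does $\alpha$ subject to the stated constraint.

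The hardest part will be bookkeeping: carrying the constants cleanly through the several chain-rule applications and the $f/W^\beta$ substitution to land on the advertised $(1/(Q-2))^2$ and $(2/(Q+\alpha-2))^2$. A minor technical point is to confirm that the substitution $f \mapsto f/W^\beta$ preserves membership in $\cA_0(\psi^{(Q-2)/2})$, which is immediate since $W$ and $W^{-1}$ are smooth and positive on $X'$, so multiplication by $W^{\pm\beta}$ preserves $C_c^\infty(X')$ without disturbing the commutation hypothesis defining $\cA_0(W)$.
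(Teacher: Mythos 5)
Your proposal is correct and follows essentially the same route as the paper: the first equivalence via Theorem \ref{thm:intineq}, and the remaining equivalences via the invertible substitution $f \mapsto f/W^\beta$ from Lemma \ref{lem:funcineq} together with the chain-rule evaluation of $LW$ and $\Gamma(W)$ under \eqref{qcond}. Your constants check out ($L(\psi^{Q-2}) = 2(Q-2)^2\psi^{Q-4}\Gamma(\psi)$ and $3(1-\beta)+(\beta^2-\beta+1)=(\beta-2)^2$), and your explicit remark that the substitution preserves $\cA_0(\psi^{(Q-2)/2})$ makes the invertibility point the paper only gestures at.
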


\begin{proof}
The first equivalence follows from Theorem \ref{thm:intineq}, and the second equivalence follows from the observation the computation performed in Lemma \ref{lem:funcineq} is invertible, i.e. \eqref{funcineq0} and \eqref{funcineq} are equivalent and related by the transformation $f \rightarrow f/W^\beta$ for any $\beta \in \bbR$.
\end{proof}

\subsection{The logarithmic $Q = 2$ case}
As mentioned earlier, the case $Q = 2$ is more delicate than the proof of Theorem \ref{mainthm} would make it seem, since $\Phi = \log \psi$ is negative if $\psi \leq 1$. Since the function $\psi$ is replaced with $\Phi = \log \psi$ which verifies \eqref{qcond} with constant $Q = 1$, it makes no sense to speak of $W = \Phi^{(Q - 2)/2} = (\log \psi)^{-1/2}$ in this region.

What makes the theory consistent in this case is the observation that \eqref{qcond} is invariant with respect to nonzero scalar multiplication, that is it holds for $\Phi$ if and only if it holds for $\lambda \Phi$ for any $\lambda \neq 0$. We proceed as follows. There are two cases: either $\psi \leq 1$ or $\psi \geq 1$. If the former, we replace $\psi$ with $-\Phi$, and if the latter, we replace $\psi$ with $\Phi$. The Hardy inequality in both cases reads exactly the same so this distinction makes no difference to the logarithmic Hardy inequality \eqref{loghardygeneral}, but nonetheless this distinction is important since, for instance, it affects the statements of Theorem \ref{thm:contraction0} and Corollaries \ref{cor:lower}, \ref{cor:upper} from the point of view of the semigroup in the logarithmic case. 

\begin{theorem}
    Let $0 \leq \psi \in C^\infty(X)$ verify \eqref{qcond} with constant $Q = 2$. If either $\psi \leq 1$ and $\Phi = -\log\psi$ or $\psi \geq 1$ and $\Phi = \log\psi$, and $\Phi^{-1/2} \in C^\infty(X)$, then the semigroup $(P_t)_{t \geq 0}$ satisfies the weighted contraction property 
    \[ \norm{\Phi^{-1/2}P_tf}_{L^2(\mu)} \leq \norm{\Phi^{-1/2}f}_{L^2(\mu)} \]
    if and only if 
    \[ \int \frac{\Gamma(\psi)}{\psi^2\log^2\psi}f^2d\mu \leq 4 \int \Gamma(f)d\mu \]
    if and only if for $\alpha \neq 1$
    \[ \int \log^\alpha\psi \frac{\Gamma(\psi)}{\psi^2\log^{2}\psi}f^2d\mu \leq \left(\frac{2}{\alpha-1}\right)^2 \int \log^\alpha\psi \Gamma(f)d\mu \]
    holds for all $f \in \cA_0(\Phi^{-1/2})$. 
\end{theorem}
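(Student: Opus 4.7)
The plan is to reduce the claim to Theorem \ref{thm:contraction0} by observing that $\Phi = \pm\log\psi$ is a nonnegative function satisfying \eqref{qcond} at the admissible constant $Q' = 1 \neq 2$, so the logarithmic $Q = 2$ case for $\psi$ collapses to the ordinary $Q' = 1$ case for $\Phi$.

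I would first verify $L\Phi = 0$ using the chain rule \eqref{intro:chainrule1} with $\phi(t) = \log t$:
\[ L\log\psi = \frac{L\psi}{\psi} - \frac{\Gamma(\psi)}{\psi^2} = \frac{(Q-2)\Gamma(\psi)}{\psi^2}, \]
which vanishes at $Q = 2$. Hence $L\Phi = 0$ irrespective of the choice of sign, and $\Phi$ trivially satisfies \eqref{qcond} at $Q' = 1$. The hypothesis that $\psi \leq 1$ or $\psi \geq 1$ prevents $\log\psi$ from changing sign, and the explicit sign choice in the statement guarantees $\Phi \geq 0$. The bilinearity of $\Gamma$ and chain rule \eqref{intro:chainrule2} give $\Gamma(\Phi) = \Gamma(\log\psi) = \Gamma(\psi)/\psi^2$ in either case.

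With $\Phi \geq 0$ satisfying \eqref{qcond} at $Q' = 1$ and $\Phi^{-1/2} = \Phi^{(Q'-2)/2} \in C^\infty(X)$ by hypothesis, I would apply Theorem \ref{thm:contraction0} to $\Phi$ directly. This produces the equivalence between the weighted contraction property
\[ \norm{\Phi^{-1/2}P_tf}_{L^2(\mu)} \leq \norm{\Phi^{-1/2}f}_{L^2(\mu)} \]
and, for $\alpha \neq 1$,
\[ \int \Phi^\alpha \frac{\Gamma(\Phi)}{\Phi^2}f^2d\mu \leq \left(\frac{2}{\alpha-1}\right)^2 \int \Phi^\alpha \Gamma(f)d\mu, \]
within the function class $\cA_0(\Phi^{-1/2})$. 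Substituting $\Gamma(\Phi)/\Phi^2 = \Gamma(\psi)/(\psi^2 \log^2\psi)$ and $\Phi^\alpha = |\log\psi|^\alpha = \log^\alpha\psi$ recovers the third stated equation. The middle (unweighted) Hardy inequality is the $\alpha = 0$ specialisation of the third, and is equivalent to the general $\alpha$ case by the invertibility of the transformation $f \to f/\Phi^\beta$ noted in the proof of Theorem \ref{thm:contraction0}.

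The only delicate point is bookkeeping the sign of $\log\psi$; once the hypothesis $\psi \leq 1$ or $\psi \geq 1$ is imposed and the invariance of \eqref{qcond} under scalar multiplication by $-1$ is invoked to work with a nonnegative $\Phi$, the rest is a direct appeal to the preceding theorem with $\psi$ replaced by $\Phi$.
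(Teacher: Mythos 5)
Your proposal is correct and follows essentially the same route as the paper: the paper's proof simply says to run the proof of Theorem \ref{thm:contraction0} with $W = (\pm\log\psi)^{-1/2}$, which is exactly your reduction of the $Q=2$ case for $\psi$ to the $Q'=1$ case for $\Phi = \pm\log\psi$ via $L\Phi = 0$, $\Gamma(\Phi) = \Gamma(\psi)/\psi^2$, and the sign-invariance of \eqref{qcond}. The substitution $Q' + \alpha - 2 = \alpha - 1$ correctly recovers the stated constants, so nothing is missing.
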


\begin{proof}
    The proof follows the proof of Theorem \ref{thm:contraction0} with $W = (\pm \log \psi)^{-1/2}$.
\end{proof}

\begin{corollary}\label{cor:loglower}
Let $0 \leq \psi \in C^\infty(X)$ satisfy $\psi \leq 1$ and verify \eqref{qcondlower} for $Q = 2$, and $\Phi = (-\log \psi)^{-1/2} \in C^\infty(X)$. Then \eqref{loghardygeneral} holds for $\alpha \geq 0$.
\end{corollary}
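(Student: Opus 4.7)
The plan is to reduce Corollary \ref{cor:loglower} to Corollary \ref{cor:upper} applied to the composition $\tilde\psi := -\log\psi$, paralleling the way the proof of Theorem \ref{mainthm} handles the logarithmic ($Q=2$) case by passing to $\Phi = -\log\psi$ which verifies \eqref{qcond} with $Q = 1$. The idea is that the equality case was already observed in Theorem \ref{mainthm}; what remains is to verify that a one-sided version of \eqref{qcond} survives the transformation.

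First, I would use the chain rule \eqref{intro:chainrule1} applied to $\phi = \log$, together with \eqref{intro:chainrule2}, to compute
\[ L\tilde\psi = -\frac{L\psi}{\psi} + \frac{\Gamma(\psi)}{\psi^2}, \qquad \Gamma(\tilde\psi) = \frac{\Gamma(\psi)}{\psi^2}. \]
The hypothesis \eqref{qcondlower} with $Q = 2$ reads $L\psi \geq \Gamma(\psi)/\psi$; dividing by $\psi > 0$ and negating reverses the inequality to give $L\tilde\psi \leq 0$, which can be rewritten as $L\tilde\psi \leq \tfrac{\tilde Q - 1}{\tilde\psi}\Gamma(\tilde\psi)$ with $\tilde Q = 1$. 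Thus $\tilde\psi$ satisfies the hypothesis \eqref{qcondupper} of Corollary \ref{cor:upper} with constant $\tilde Q = 1 < 2$, and the smoothness assumption $(-\log\psi)^{-1/2} \in C^\infty(X)$ ensures that the multiplier $W = \tilde\psi^{(\tilde Q - 2)/2} = \tilde\psi^{-1/2}$ is admissible in the sense needed by Lemma \ref{lem:suffcond}.

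Then I would invoke Corollary \ref{cor:upper} applied to $\tilde\psi$ to obtain, for each admissible transformed exponent $\tilde\alpha$,
\[ \int \tilde\psi^{\tilde\alpha} \frac{\Gamma(\tilde\psi)}{\tilde\psi^{2}} f^{2}\, d\mu \;\leq\; \left(\frac{2}{\tilde\alpha - 1}\right)^{2} \int \tilde\psi^{\tilde\alpha}\, \Gamma(f)\, d\mu. \]
Substituting $\tilde\psi = -\log\psi \geq 0$, $\Gamma(\tilde\psi) = \Gamma(\psi)/\psi^{2}$, $\tilde\psi^{2} = \log^{2}\psi$, and $\tilde\psi^{\tilde\alpha} = \lvert\log\psi\rvert^{\tilde\alpha} = \log^{\tilde\alpha}\psi$ (using the convention $\log^p\psi := \lvert\log\psi\rvert^p$) recovers \eqref{loghardygeneral} with $\alpha = \tilde\alpha$.

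The main obstacle is the sign bookkeeping: the map $\psi \mapsto -\log\psi$ reverses the sense of the laplacian comparison, which is precisely why Corollary \ref{cor:upper}, rather than \ref{cor:lower}, must be invoked on $\tilde\psi$. This in turn constrains the admissible range of $\tilde\alpha$, and one should carefully verify that the stated range of $\alpha$ matches the range of $\tilde\alpha$ one obtains from Corollary \ref{cor:upper} after the substitution (tracing through Lemma \ref{lem:funcineq} with the correspondence $\alpha = (\tilde Q - 2)(1-\beta) = \beta - 1$). Everything else is a direct translation of the equality case carried out in Theorem \ref{mainthm}, so no genuinely new estimates are required.
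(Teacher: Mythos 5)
Your reduction to Corollary \ref{cor:upper} via $\tilde\psi = -\log\psi$ is exactly the route the paper takes (its proof replaces $\Phi = \log\psi$ by $-\Phi$ and invokes Lemma \ref{lem:funcineq}), and your computations of $L\tilde\psi$, $\Gamma(\tilde\psi)$ and the identification of \eqref{qcondupper} with $\tilde Q = 1 < 2$ are all correct. The gap is precisely the step you defer to the end: the range of $\alpha$. Corollary \ref{cor:upper} applied to $\tilde\psi$ yields \eqref{hardygeneral} only for $\tilde\alpha \leq 0$, and since $\tilde\psi^{\tilde\alpha} = \lvert\log\psi\rvert^{\tilde\alpha} = \log^{\tilde\alpha}\psi$ the substitution is the identity on the exponent, so your argument establishes \eqref{loghardygeneral} for $\alpha \leq 0$ --- not for $\alpha \geq 0$ as the statement requires. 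This is not bookkeeping that more care repairs: in \eqref{funcineqgeneral} with $W = \tilde\psi^{-1/2}$, the hypothesis $L\tilde\psi \leq 0$ furnishes a \emph{lower} bound on $LW$ (the exponent $-1/2$ is negative), so the term $(1-\beta)\int W^{1-2\beta}LW\,f^2\,d\mu$ can be bounded below only when $1-\beta \geq 0$, which forces $\alpha = \beta - 1 \leq 0$ (a direct integration-by-parts argument extends this to $\alpha < 1 = 2 - \tilde Q$, but no further).

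In fact the mismatch lies in the statement rather than in your argument. The paper's proof asserts that $\beta > 1$, hence $\alpha = \beta - 1 \geq 0$, allows the left side of \eqref{funcineqgeneral} to be bounded below; the sign analysis above shows the opposite, and the stated range fails for $\alpha > 1$. For a concrete check, take $X = (0,1)$, $L = \partial_x^2 + \tfrac{c}{x}\partial_x$ with $c > 1$, $\mu = x^c\,dx$, $\psi = x$: then $0 \leq \psi \leq 1$ verifies \eqref{qcondlower} with $Q = 2$, yet \eqref{loghardygeneral} with $\alpha = 2$ reads $\int_0^1 x^{c-2}f^2\,dx \leq 4\int_0^1 (\log x)^2 (f')^2 x^c\,dx$, which is violated by plateau functions $f \equiv 1$ on $(e^{-N}, e^{-a})$ as $a \to 0$ and $N \to \infty$ (the left side tends to $\tfrac{1}{c-1}$ while the right side tends to $0$). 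So your argument, carried to completion, proves the corollary with the range corrected to $\alpha \leq 0$ --- after the substitution the problem becomes identical to Corollary \ref{cor:logupper}, whose range $\alpha \leq 0$ is the consistent one --- but it does not, and cannot, establish the range $\alpha \geq 0$ as stated.
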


\begin{corollary}\label{cor:logupper}
Let $0 \leq \psi \in C^\infty(X)$ satisfy $\psi \geq 1$ and verify \eqref{qcondupper} for $Q = 2$, and $\Phi = (\log \psi)^{-1/2} \in C^\infty(X)$. Then \eqref{loghardygeneral} holds for $\alpha \leq 0$.
\end{corollary}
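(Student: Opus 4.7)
My plan is to reduce the $Q=2$ upper-bound case with $\psi \geq 1$ to the setting of Corollary \ref{cor:upper} (with some $Q' < 2$) via the substitution $\Phi = \log \psi$. This mirrors the reduction used in the proof of the equality case in Theorem \ref{mainthm}, now applied to a one-sided bound instead of an equality.

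The first step will be to check that $\Phi = \log \psi$ inherits an upper Laplacian bound of the form \eqref{qcondupper}. Since $\psi \geq 1$ we have $\Phi \geq 0$, so $\Phi$ is an admissible candidate. By the chain rules \eqref{intro:chainrule1} and \eqref{intro:chainrule2},
\[ \Gamma(\Phi) = \frac{\Gamma(\psi)}{\psi^2}, \qquad L\Phi = \frac{L\psi}{\psi} - \frac{\Gamma(\psi)}{\psi^2}. \]
The hypothesis \eqref{qcondupper} with $Q = 2$ reads $L\psi \leq \Gamma(\psi)/\psi$, which rearranges directly to $L\Phi \leq 0$. Since the right-hand side of \eqref{qcondupper} with parameter $Q' = 1$ is $(Q'-1)\Gamma(\Phi)/\Phi = 0$, this is exactly \eqref{qcondupper} for $\Phi$ with $Q' = 1 < 2$.

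Next I would invoke Corollary \ref{cor:upper} applied to $\Phi$ with parameter $Q' = 1$, which yields \eqref{hardygeneral} for every $\alpha \leq 0$:
\[ \int \Phi^\alpha \frac{\Gamma(\Phi)}{\Phi^2} f^2 d\mu \leq \left(\frac{2}{Q' + \alpha - 2}\right)^2 \int \Phi^\alpha \Gamma(f) d\mu = \left(\frac{2}{\alpha - 1}\right)^2 \int \Phi^\alpha \Gamma(f) d\mu. \]
Substituting back $\Phi = \log \psi$ and using $\Gamma(\Phi)/\Phi^2 = \Gamma(\psi)/(\psi^2 \log^2 \psi)$ gives precisely \eqref{loghardygeneral} for $\alpha \leq 0$. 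Note that $\alpha \neq 1$ is automatically satisfied in this range, so no exclusion is needed.

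The main (mild) technical point is verifying that the smoothness hypothesis $(\log\psi)^{-1/2} \in C^\infty(X)$ plays the role required of the auxiliary multiplier $W = \Phi^{(Q'-2)/2} = \Phi^{-1/2}$ appearing in Lemma \ref{lem:suffcond}, and that the class $\cA_0(\Phi^{-1/2})$ implicit in Corollary \ref{cor:upper} agrees with $\cA_0((\log\psi)^{-1/2})$ in the statement. This is a tautological identification once $\Phi = \log \psi$ is made, so no genuine obstacle arises; the whole argument is a substitution, and the sharp prefactor $(2/(\alpha-1))^2$ is inherited directly from Corollary \ref{cor:upper}.
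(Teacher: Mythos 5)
Your proposal is correct and follows essentially the same route as the paper: pass to $\Phi = \log\psi \geq 0$, note that \eqref{qcondupper} with $Q=2$ forces $L\Phi \leq 0$, which is the upper comparison \eqref{qcondupper} for $\Phi$ with constant $Q'=1<2$, and then invoke the one-sided machinery of Corollary \ref{cor:upper} (equivalently, Lemma \ref{lem:funcineq} with $\beta \leq 1$, giving $\alpha = \beta - 1 \leq 0$). The paper writes out only the mirror case $\psi \leq 1$ and declares the present one ``similar''; your version is, if anything, slightly cleaner since $\psi \geq 1$ makes $\Phi$ nonnegative without the sign flip $\Phi \mapsto -\Phi$.
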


\begin{proof}
We assume we are given a lower bound, the upper bound being similar. If $\psi$ verifies the laplacian lower bound \eqref{qcondlower} with constant $Q = 2$ then $L(\log \psi) \geq 0$, that is $\log \psi$ verifies \eqref{qcondlower} with constant $Q = 1$. But for a one-sided laplacian comparison with constant $Q = 1 < 2$ to yield a Hardy inequality in this setup, we need a laplacian upper bound \eqref{qcondupper}. Replacing $\Phi$ with $-\Phi$, the result now follows from Lemma \ref{lem:funcineq}, since if $\beta > 1$, in which case $\alpha = \beta - 1 \geq 0$, then the left hand side of \eqref{funcineqgeneral} can be bounded below. 
\end{proof}

In practice, whether or not $\psi \leq 1$ or $\psi \geq 1$ is a matter of whether the underlying space is a subset of $\{\psi \leq 1\}$ or $\{\psi \geq 1\}$; from the point of view of the semigroup these two spaces give rise to two different semigroups, say, in the case of the Dirichlet laplacian, two Brownian motions killed at the common boundary $\{\psi = 1\}$.

\subsection{Absence of the technical assumption}\label{s3.4}
Before we close this chapter, let us return to the statement made in the previous chapter that the Hardy inequality can still be obtained even in the absence of the technical assumption can be sidestepped entirely. 

Indeed, the Hardy inequalities themselves can be obtained without going through the semigroup, because the pointwise inequality 
\begin{align*}
    L(W^2)f^2 + 2\Gamma(W^2, f^2) + 2W^2\Gamma(f) &\geq (2WLW - 6\Gamma(W))f^2 \vphantom{2\inf_{W \neq 0} \left(\frac{LW}{W} - 3\frac{\Gamma(W)}{W^2}\right)W^2f^2} \\
    &\geq 2\inf_{W \neq 0} \left(\frac{LW}{W} - 3\frac{\Gamma(W)}{W^2}\right)W^2f^2 
\end{align*}
for $f$ compactly supported away from the singularities of $L$ and $W$ can be directly integrated with respect to $\mu$ to obtain, after integrating by parts, 
\[ -\int L(W^2)f^2d\mu + 2\int W^2\Gamma(f) \geq 2\gamma \int W^2f^2d\mu, \] 
the starting point of Lemma \ref{lem:funcineq}. Thus in what is to follow, all results hold, even if passage through the semigroup is not necessarily true.

In practice, the condition appears difficult to verify except in particular cases in which one is provided explicit knowledge of the mapping properties of the semigroup (perhaps together with explicit heat kernel estimates). For instance, although the discussion of this chapter can be extended at least formally to generators with weighted carr\'e du champs and reversible measures (also called weighted or drifted laplacians respectively in the literature), we have intentionally avoided their discussion since the technical assumption in such cases are even harder to verify. Nonetheless, for completeness' sake, let us remark that the weighted Hardy inequality \eqref{hardygeneral} can be realised as a standard Hardy inequality \eqref{hardy} with respect to a suitable weight. Indeed, If $0 \leq \omega \in C^1(X)$ then the operator defined by 
\[ L_\omega f = \omega Lf + \Gamma(\omega, f) \]
is a diffusion with the same reversible measure $\mu_\omega = \mu$ as $L$ but with a weighted carr\'e du champ $\Gamma_\omega(f, g) = \omega \Gamma(f, g)$. Thus assuming $\psi$ verifies \eqref{qcond} with constant $Q \in \bbR$ for $L$, by taking $\omega = \psi^\alpha$ we find
\begin{align*}
    L_\omega \psi = \psi^\alpha L \psi + \Gamma(\psi^\alpha, \psi) &= \psi^\alpha \frac{Q - 1}{\psi} \Gamma(\psi) + \alpha \psi^{\alpha - 1} \Gamma(\psi) \\
    &= \frac{Q + \alpha - 1}{\psi} \psi^\alpha \Gamma(\psi) \\
    &= \frac{Q + \alpha - 1}{\psi} \Gamma_\omega(\psi, \psi),
\end{align*}
i.e. $\psi$ verifies \eqref{qcond} with constant $Q + \alpha$ for $L_\omega$ and the result follows from Theorem \ref{mainthm}. Alternatively, the proof can be done so that the factor $\psi^\alpha$ is carried by $\mu$ instead of $\Gamma$. If $\sigma \in C^1(X)$ then the operator defined by $L_\sigma f = Lf + \Gamma(\sigma, f)$ is a diffusion with the same carr\'e du champ $\Gamma_\sigma = \Gamma$ as $L$ but with a weighted reversible measure $d\mu_\sigma = e^\sigma d\mu$. By taking $\sigma = \log \psi^\alpha = \alpha \log \psi$ we find
\[ L_\sigma\psi = L\psi + \Gamma(\sigma, \psi) = \frac{Q - 1}{\psi}\Gamma(\psi) + \frac{\alpha}{\psi}\Gamma(\psi) = \frac{Q + \alpha - 1}{\psi}\Gamma_\sigma(\psi), \]
i.e. $\psi$ verifies \eqref{qcond} with constant $Q + \alpha$ for $L_\sigma$ as expected. In either case, at least formally, the Hardy inequality can be interpreted as a weighted contraction property for the semigroups generated by $L_\omega$ and $L_\sigma$. This leads to new inequalities in the logarithmic setting, by choosing $\alpha$ such that $Q + \alpha = 2$.  

\begin{theorem}
    Let $0 \leq \psi \in C^\infty(X)$ verify \eqref{qcond} with $Q \neq 2$, either $\psi \leq 1$ and $\Phi = -\log\psi$ or $\psi \geq 1$ and $\Phi = \log\psi$, and $\Phi^{-1/2} \in C^\infty(X)$, then the Hardy inequalities 
    \[ \int \psi^{2-Q} \frac{\Gamma(\psi)}{\psi^2\log^2\psi} f^2d\mu \leq 4 \int \psi^{2-Q} \Gamma(f)d\mu \]
    and more generally for $\alpha \neq 1$
    \[ \int \psi^{2-Q} \log^\alpha\psi \frac{\Gamma(\psi)}{\psi^2\log^{2}\psi} f^2d\mu \leq \left(\frac{2}{\alpha-1}\right)^2 \int \psi^{2-Q} \log^\alpha\psi \Gamma(f)d\mu \] 
    hold for $f \in \cA_0(\Phi^{-1/2})$. 
\end{theorem}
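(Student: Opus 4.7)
The plan is to reduce this to the logarithmic Hardy inequality already established in the $Q=2$ theorem, by invoking the weight-absorption trick introduced in Section \ref{s3.4}. Given $\psi$ satisfying \eqref{qcond} with constant $Q \neq 2$, I would set $\alpha_0 = 2 - Q$ (so $\alpha_0 \neq 0$) and choose either of the two modifications $L_\omega$ (with $\omega = \psi^{\alpha_0}$) or $L_\sigma$ (with $\sigma = \alpha_0 \log \psi$). Using the $L_\sigma$ route: the new reversible measure is $d\mu_\sigma = \psi^{2-Q} d\mu$ while the carré du champ is unchanged, $\Gamma_\sigma = \Gamma$. The formal computation displayed at the end of \S3.4 shows that $\psi$ now verifies \eqref{qcond} with respect to $L_\sigma$ with constant $Q + \alpha_0 = 2$.

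Next, I would invoke the logarithmic Hardy theorem preceding Corollary \ref{cor:loglower}, applied to the diffusion $(X, \mu_\sigma, \Gamma_\sigma)$ and the function $\psi$ (still satisfying $\psi \leq 1$ or $\psi \geq 1$ as in the hypothesis, and with the same $\Phi = \pm \log \psi$, for which $\Phi^{-1/2}\in C^\infty(X)$ by assumption). This produces, for $\alpha \neq 1$,
\[ \int \log^\alpha \psi\, \frac{\Gamma_\sigma(\psi)}{\psi^2 \log^2 \psi} f^2 \, d\mu_\sigma \leq \left(\frac{2}{\alpha - 1}\right)^2 \int \log^\alpha \psi\, \Gamma_\sigma(f)\, d\mu_\sigma. \]
Substituting $\Gamma_\sigma = \Gamma$ and $d\mu_\sigma = \psi^{2-Q}\, d\mu$ yields precisely the weighted logarithmic Hardy inequality claimed, with the $\alpha = 0$ case recovering the displayed sharp constant $4 = (2/(0-1))^2$.

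The main obstacle — and the reason this is only a ``formal'' proof along the lines hinted at in Section \ref{s3.4} — is the transfer of the technical regularity class: the hypothesis $f \in \cA_0(\Phi^{-1/2})$ is stated for the original diffusion $L$, whereas the logarithmic Hardy inequality I want to cite is naturally formulated for $L_\sigma$. To sidestep this, I would appeal to the pointwise-then-integrate route described at the start of \S3.4: the inequality \eqref{funcineqgeneral} of Lemma \ref{lem:funcineq} (applied to $L_\sigma$, $\Gamma_\sigma$, $\mu_\sigma$, and the multiplier $W = \Phi^{-1/2}$) is obtained by integrating a pointwise consequence of \eqref{suffcond}, so it requires only that $f$ be smooth and compactly supported away from the singular set of the weight — a condition already encoded in $\cA_0(\Phi^{-1/2}) \subset C_c^\infty(X')$. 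The remaining verifications (the change $f \mapsto f/W^\beta$ with $\beta = \alpha + 1$, producing the $\log^\alpha \psi$ factor, and the observation that $L_\sigma$ and $L$ have the same $X'$ up to the smoothness of $\psi^{2-Q}$, which follows from smoothness of $\psi$) are routine.

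Finally, I would note in passing that the $L_\omega$ variant gives the same conclusion by a dual computation — there $\mu_\omega = \mu$ but $\Gamma_\omega = \psi^{2-Q}\Gamma$, so the factor $\psi^{2-Q}$ appears on both sides through the carré du champ rather than the measure — and that the two sign cases $\psi \leq 1$ versus $\psi \geq 1$ are handled uniformly by the convention $\log^p \psi := |\log \psi|^p$ together with the scale-invariance of \eqref{qcond} under $\Phi \mapsto -\Phi$, exactly as in the proof of the unweighted logarithmic theorem.
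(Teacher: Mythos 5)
Your proposal is correct and follows essentially the same route as the paper: the theorem there is presented as an immediate consequence of the preceding discussion in \S3.4, namely transferring the weight $\psi^{2-Q}$ onto the carr\'e du champ via $L_\omega$ or onto the reversible measure via $L_\sigma$ so that $\psi$ verifies \eqref{qcond} with constant $Q+\alpha_0=2$, and then invoking the logarithmic $Q=2$ case. Your additional remarks on the regularity class and the sign convention for $\log^p\psi$ are consistent with how the paper frames these results (as integrated pointwise inequalities not requiring passage through the semigroup), so nothing further is needed.
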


\begin{remark}
    This method of proving Hardy inequalities, namely integrating a pointwise inequality, might be contrasted with the method of ``expanding the square" which integrates the pointwise inequality $\abs{v}^2 \geq 0$ for any vector $v$. In particular, the choice $v = \nabla f + \alpha \frac{x}{\abs{x}^2}f$ integrated against Lebesgue measure recovers the euclidean Hardy inequality on $\bbR^m$ with sharp constant after integrating by parts and maximising the constant in $\alpha$. 
\end{remark}

\section{Applications}
There is a large literature of Hardy inequalities in various settings. Here we document a few Hardy inequalities appearing in the existing literature by giving a diffusion $L$ together with a weight $\psi$ verifying \eqref{qcond} for some constant $Q \in \bbR$. We will present results without passing through the technical assumption \eqref{technical}. In view of \hyperref[s3.4]{\S3.4}, this means that the results hold irrespective of whether or not a pointwise subcommutation \eqref{multineq} for a semigroup is provided. 

\subsection{Stratified Lie groups}\label{sec4.1}
\addtocontents{toc}{\protect\setcounter{tocdepth}{0}}
A Lie group $\bbG = (\bbR^m, \circ)$ is said to be stratified if its Lie algebra $\mf{g}$ admits a stratification $\mf{g} = \otimes_{i=0}^{r-1} \mf{g}_i$ where $\mf{g}_i = [\mf{g}_0, \mf{g}_{i-1}]$ for $1 \leq i < r-1$. In this setting, there are at least two Hardy type inequalities of interest. 

Our first result is a ``horizontal" Hardy inequality. The stratification of $\mf{g}$ induces a coordinate system on $\bbG \cong \otimes_{i=0}^{r-1} \bbR_i$ where $\bbR_i \cong \bbR^{\dim(\mf{g}_i)}$ and $\dim(\mf{g}_i)$ is the topological dimension of $\mf{g}_i$. The first stratum $\mf{g}_0$ is called the horizontal stratum, and it admits a canonical basis $\{X_1, \cdots, X_\ell\}$ of vector fields generating $\mf{g}$. The operator $\nabla_\bbG = (X_1, \cdots, X_\ell)$ is called the horizontal gradient or also the subgradient, the coordinates $x_0 \in \bbR_0$ associated to $\mf{g}_0$ are called the horizontal coordinates, and their euclidean norm $\abs{x_0}$ is called the horizontal norm. Let us emphasise that, in analogue with the euclidean setting, the diffusion $L$ is the sublaplacian $\Delta_\bbG = \nabla_\bbG \cdot \nabla_\bbG = \sum_{i=1}^\ell X_i^2$, its carr\'e du champ $\Gamma(f) = \abs{\nabla_\bbG f}^2$ is the square of the subgradient, and its reversible measure $\mu$ is Lebesgue measure $d\xi$ on $\bbG$. For more details on stratified Lie groups we refer the reader to \cite[Chapter~1]{BLU07}. 

\begin{corollary}
Let $\bbG$ be a stratified Lie group, $x_0' \sbs x_0 \in \bbR_0$ a subcoordinate system of dimension $n_0 \geq 2$ consisting solely of horizontal coordinates. Then 
\[ \int_{\bbG} \abs{x_0'}^\alpha\frac{f^2}{\abs{x_0'}^2}d\xi \leq \left(\frac{2}{n_0+\alpha-2}\right)^2 \int_{\bbG} \abs{x'}^\alpha\abs{\nabla_\bbG f}^2d\xi \]
holds for $f \in C_c^\infty(\bbG \setminus \{\abs{x_0'} = 0\})$ and $\alpha$ such that $n_0 + \alpha \neq 2$, and
\[ \int_{\bbG} \abs{x_0'}^{2-n_0}\log^\alpha\abs{x_0'}\frac{f^2}{\abs{x_0'}^2\log^2\abs{x_0'}}d\xi \leq \left(\frac{2}{\alpha-1}\right)^2 \int_{\bbG} \abs{x_0'}^{2-n}\log^\alpha\abs{x_0'}\abs{\nabla_\bbG f}^2d\xi \]
holds for $f \in C_c^\infty(\bbG \setminus \{\abs{x_0'} \in \{0, 1\}\})$ and $\alpha \neq 1$. 
\end{corollary}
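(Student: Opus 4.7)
The plan is to reduce both inequalities to Theorem \ref{mainthm} and its weighted logarithmic counterpart from \hyperref[s3.4]{\S3.4} by making the choice $\psi = |x_0'|$, so the real content of the proof is the verification of the identity \eqref{qcond} for this particular weight on $\bbG$.

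First I would invoke the structural description of horizontal vector fields on a stratified Lie group (see \cite[Chapter~1]{BLU07}): in exponential coordinates adapted to the stratification, each basis horizontal vector field has the form
\[ X_i = \partial_{x_i} + \sum_{j > \ell} P_{ij}(x) \partial_{x_j}, \]
where $P_{ij}$ are polynomials depending only on coordinates from strata strictly below that of $x_j$. Since $|x_0'|$ depends only on the horizontal subcoordinates $x_0' \subseteq x_0$, all the higher-stratum derivative terms in $X_i$ annihilate $|x_0'|$ (and any smooth function thereof), so each $X_i$ acts on $|x_0'|$ precisely as the Euclidean partial $\partial_{x_i}$. A direct computation then yields
\[ \Gamma(|x_0'|) = |\nabla_\bbG |x_0'||^2 = 1, \qquad \Delta_\bbG |x_0'| = \frac{n_0 - 1}{|x_0'|}, \]
so $\psi = |x_0'|$ satisfies \eqref{qcond} with constant $Q = n_0$.

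With this identification in hand, the first inequality is immediate from \eqref{hardygeneral}, while the second (weighted logarithmic) inequality follows from the final theorem of \hyperref[s3.4]{\S3.4} with $Q = n_0$. In the logarithmic case one splits $\bbG \setminus \{|x_0'| \in \{0,1\}\}$ into the regions $\{|x_0'| < 1\}$ and $\{|x_0'| > 1\}$, applies Corollaries \ref{cor:loglower} and \ref{cor:logupper} (or more precisely their shifted weighted versions) separately on each, and then uses the convention $\log^p\psi = |\log\psi|^p$ to recombine the two contributions into the single integral stated in the corollary.

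The only real obstacle is the computation of $\Gamma(|x_0'|)$ and $\Delta_\bbG|x_0'|$; what makes this slightly subtle, and worth stating explicitly, is that the calculation depends on the \emph{horizontality} of the coordinates $x_0'$ together with the triangular structure of the $X_i$'s in exponential coordinates. Once this is recorded the rest is bookkeeping: $\psi = |x_0'|$ plays the role of the Euclidean norm on an $n_0$-dimensional Euclidean subspace seen from within $\bbG$, the \emph{homogeneous dimension} $Q$ in \eqref{qcond} becomes the topological dimension $n_0$ of this subspace, and Theorem \ref{mainthm} outputs the desired sharp-looking constants $\bigl(\tfrac{2}{n_0+\alpha-2}\bigr)^2$ and $\bigl(\tfrac{2}{\alpha-1}\bigr)^2$ automatically.
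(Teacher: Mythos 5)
Your proposal is correct and follows essentially the same route as the paper: the paper's proof is the one-line observation that the subgradient coincides with the Euclidean gradient on functions of the horizontal coordinates, so $\psi = \abs{x_0'}$ verifies \eqref{qcond} with $Q = n_0$, after which Theorem \ref{mainthm} and the weighted logarithmic theorem of \S3.4 apply. Your additional justification via the triangular form of the $X_i$ in exponential coordinates, and the splitting of the logarithmic case into $\{\abs{x_0'}<1\}$ and $\{\abs{x_0'}>1\}$, merely make explicit what the paper leaves implicit.
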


This was first proven by \cite[Theorem~3.3]{Amb04} on the Heisenberg group $\bbH^m$, and later generalised to a stratified Lie group $\bbG$ by \cite[Corollary~5.7]{RS17}.

\begin{proof}
The subgradient coincides with the euclidean gradient for a function depending only on horizontal coordinates, so $\abs{x_0'}$ verifies \eqref{qcond} with constant $Q = n_0$ for $L = \Delta_\bbG$.
\end{proof} 

\begin{remark}
    From the Bakry-\'Emery perspective, the technical assumption \eqref{technical} can be verified for $L = \Delta_\bbG$ generating the standard semigroup $(P_t)_{t \geq 0}$ on $L^2(\bbG)$, at least in the euclidean setting. It is known that the sublaplacian $\Delta_\bbG$ is hypoelliptic and has a smooth heat kernel satisfying gaussian decay estimates. If $f$ is smooth and compactly supported, then $P_{t-s}f$ is smooth and bounded, in fact Schwartz since $L$ preserves Schwartz functions (see \cite[Lemma~2.1]{Bak08}). It follows $(P_{t-s}f)^2$ is Schwartz and therefore, at least in dimension $n_0 \geq 3$ where $W = \abs{x_0'}^{(n-2)/2}$, that $W^2(P_{t-s}f)^2 \in L^2(\bbG)$ and $L(W^2(P_{t-s}f)^2) \in L^2(\bbG)$ since $W^2$, $\Gamma(W)$, and $L(W^2)$ are all locally integrable and polynomially bounded. Since $L = \Delta_\bbG$ is essentially self-adjoint on $C_c^\infty(\bbG) \sbs L^2(\bbG)$ and its self-adjoint extension has domain $H^2(\bbG) = \{u \in L^2(\bbG) \mid Lu \in L^2(\bbG)\}$, we may then conclude $W^2(P_{t-s}f)^2 \in \cD(L)$ on which $L$ and $P_s$ commute, and \eqref{technical} follows.
    
    Alternatively, we may first approximate $W$ with the smooth $W_\epsilon = (\epsilon + \abs{x_0'}^2)^{(n_0-2)/4}$ for $\epsilon > 0$ and check the sufficient condition \eqref{suffcond}. A direct computation yields 
    \[ \frac{LW_\epsilon}{W_\epsilon} - 3\frac{\Gamma(W_\epsilon)}{W_\epsilon^2} = \frac{\epsilon n_0(n_0 - 2)}{2}(\epsilon + \abs{x_0'}^2)^{-2} \geq 0, \]
    i.e. $W_\epsilon$ verifies \eqref{suffcond} with constant $\gamma = 0$. Since $W_\epsilon^2(P_{t-s}f)^2$ is Schwartz, it belongs to $\cD(L)$, and \eqref{technical} follows. Thus $W_\epsilon^2(P_tf)^2 \leq P_t(W_\epsilon^2f^2)$ holds, and dominated convergence yields $W^2(P_tf)^2 \leq P_t(W^2f^2)$. 
    
    Even on a bounded domain $0 \ni \Omega \sbs \bbR^m$ with $C^\infty$-boundary and containing a neighbourhood of the origin, the analysis can be complicated. Consider for instance the sub-Markov semigroup $(P_t^D)_{t \geq 0}$ generated by the euclidean Dirichlet laplacian $L = \Delta_\Omega^D$. We wish to know if $W_\epsilon^2(P_{t-s}^Df)^2$ belongs to $\cD(L) = H_0^1(\Omega) \cap H^2(\Omega)$. However, only knowing $P_{t-s}^Df \in \cD(L)$ by standard semigroup theory appears inadequate, since neither $H_0^1(\Omega)$ nor $H^2(\Omega)$ are algebras in general. But from the probabilistic interpretation of the semigroup as an expectation (see \cite[pp.~139]{BGL14}), we have $LP_tf = P_tLf$ whenever $f$ and $Lf$ are continuous and bounded, which holds since $W_\epsilon^2(P_{t-s}f)^2$ is a smooth function on a bounded domain. The same holds by similar considerations for the Markov semigroup generated by the Neumann laplacian.    
\end{remark}

Our second result is a ``homogeneous" Hardy inequality. On any stratified Lie group of homogeneous dimension $Q(\bbG) = \sum_{i=0}^{r-1} (i+1)\dim(\mf{g}_i) \geq 3$, which we assume here and in the sequel, there exists by \cite[Theorem~2.1]{Fol75} a fundamental solution $u$ of $\Delta_\bbG$. The function $N = u^{1/(2-Q(\bbG))}$ is called the Kor\'anyi-Folland gauge. 

\begin{corollary}\label{cor:hardyN}
Let $\bbG$ be a stratified Lie group. Then 
\[ \int_{\bbG} N^\alpha\frac{\abs{\nabla_\bbG N}^2}{N^2}f^2d\xi \leq \left(\frac{2}{Q(\bbG) + \alpha - 2}\right)^2 \int_{\bbG} N^\alpha\abs{\nabla_\bbG f}^2d\xi \]
holds for $f \in C_c^\infty(\bbG \setminus \{0\})$ and $\alpha$ such that $Q(\bbG) + \alpha \neq 2$, and 
\[ \int_{\bbG} N^{2-Q(\bbG)}\log^\alpha N\frac{\abs{\nabla_\bbG N}^2}{N^2\log^2N}f^2d\xi \leq \left(\frac{2}{\alpha-1}\right)^2 \int_{\bbG} N^{2-Q(\bbG)}\log^\alpha N\abs{\nabla_\bbG f}^2d\xi \] 
holds for $f \in C_c^\infty(\bbG \setminus \{N \in \{0, 1\}\})$ and $\alpha \neq 1$. 
\end{corollary}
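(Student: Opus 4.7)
The plan is to reduce both inequalities to an application of Theorem \ref{mainthm} (and its weighted logarithmic counterpart stated at the end of \S3.4) by verifying that $\psi = N$ satisfies the structural equation \eqref{qcond} with constant $Q = Q(\bbG)$. In this setting $L = \Delta_\bbG$ with carr\'e du champ $\Gamma(f) = \abs{\nabla_\bbG f}^2$ and reversible measure $d\xi$, so \eqref{qcond} reads $\Delta_\bbG N = \frac{Q(\bbG)-1}{N}\abs{\nabla_\bbG N}^2$ on $\bbG\setminus\{0\}$.

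First I would record the smoothness of $N$: since $\Delta_\bbG$ is hypoelliptic and $u$ is its fundamental solution, $u$ is smooth on $\bbG \setminus \{0\}$, hence so is $N = u^{1/(2-Q(\bbG))}$, and $N > 0$ away from the origin. Second, I would exploit the fact that $\Delta_\bbG u = 0$ on $\bbG \setminus \{0\}$. Writing $u = N^{2-Q(\bbG)}$ and applying the chain rules \eqref{intro:chainrule1}--\eqref{intro:chainrule2} to $\Delta_\bbG$, we obtain
\[
0 = \Delta_\bbG(N^{2-Q(\bbG)}) = (2-Q(\bbG))N^{1-Q(\bbG)}\Delta_\bbG N + (2-Q(\bbG))(1-Q(\bbG))N^{-Q(\bbG)}\abs{\nabla_\bbG N}^2.
\]
Dividing through by $(2 - Q(\bbG))N^{-Q(\bbG)}$, which is legitimate since $Q(\bbG) \geq 3$, yields precisely \eqref{qcond} for $\psi = N$ with constant $Q = Q(\bbG)$.

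Third, with \eqref{qcond} in hand, the first inequality is immediate from \eqref{hardygeneral} of Theorem \ref{mainthm}, noting $Q(\bbG) + \alpha \neq 2$ is exactly the hypothesis; the compact support of $f$ in $\bbG \setminus \{0\}$ supplies the admissibility $f \in \cA_0(N^{(Q(\bbG)-2)/2})$ after the density remarks following Theorem \ref{mainthm}, or, as noted in \S\ref{s3.4}, more directly from the pointwise integration argument bypassing the semigroup technical condition. The second (logarithmic) inequality follows in the same way from the weighted logarithmic Hardy inequality proved in the final theorem of \S\ref{s3.4}, applied with $\psi = N$ and the same constant $Q = Q(\bbG)$; here one must restrict support away from the additional singularity $\{N = 1\}$ where $\log N$ vanishes, accounting for the exclusion $\{N \in \{0,1\}\}$ in the statement.

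The main (and only) obstacle is algebraic: verifying that the identity $\Delta_\bbG u = 0$ propagates to the nonlinear equation \eqref{qcond} for $N$ through the substitution $u = N^{2-Q(\bbG)}$. Once this is done, nothing more is needed beyond invoking the abstract results of \S3. I would not expect any analytic subtlety since the Kor\'anyi-Folland gauge is smooth away from the origin and the relevant integrals converge for $f$ compactly supported away from the prescribed exceptional sets.
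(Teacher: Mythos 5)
Your proposal is correct and matches the paper's proof, which is exactly the one-line ``direct computation'' that $N$ verifies \eqref{qcond} with $Q = Q(\bbG)$; you have simply spelled out that computation via the harmonicity of $u = N^{2-Q(\bbG)}$ away from the origin and the chain rule, and then invoked Theorem \ref{mainthm} and the weighted logarithmic theorem of \S\ref{s3.4} just as the paper intends. No gaps.
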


This was first proven by \cite[Corollary~2.1]{GL90} on the Heisenberg group, and later generalised to a stratified Lie group independently by \cite[Corollary~3.9]{Amb05} and \cite[Theorem~4.1]{Kom05}.

\begin{proof}
By direct computation, $N$ verifies \eqref{qcond} with constant $Q = Q(\bbG)$ for $L = \Delta_\bbG$. 
\end{proof}

Although the homogeneous dimension $Q(\bbG)$ is strictly larger than the horizontal dimension $n_0 \vcentcolon= \dim(\mf{g}_0)$, unless $\bbG = (\bbR^m, +)$ is abelian and trivial, the weight in the homogeneous Hardy inequality carries the degenerate factor $\abs{\nabla_\bbG N}^2$ which in general is not bounded below; for instance, on a group of Heisenberg type (see \cite[Chapter~18]{BLU07}), it vanishes along the zero locus of the horizontal norm. In contrast, the horizontal Hardy inequality carries no such factor and, additionally, by comparability (see \cite[Corollary~5.1.4]{BLU07}), at least for $\alpha < 2$, yields a nondegenerate Hardy inequality for any homogeneous norm $\varrho$ which are a special class of functions defined on $\bbG$ as follows. First, define the family $(\delta_\lambda)_{\lambda > 0}$ of dilations
\[ \delta_\lambda(\xi) = \delta_\lambda(x_0, \cdots, x_{r-1}) = (\lambda x_0, \cdots, \lambda^rx_{r-1}) \]
associated to the stratification $\mf{g} = \otimes_{i=0}^{r-1} \mf{g}_i$ and $\bbG \cong \otimes_{i=0}^{r-1} \bbR_i \ni (x_0, \cdots, x_{r-1})$. A homogeneous quasinorm is a smooth (almost everywhere) function $\varrho: \bbG \rightarrow \bbR_{\geq 0}$ symmetric about the origin $\varrho(-\epsilon) = \varrho(\epsilon)$ and $1$-homogeneous $\varrho \circ \delta_\lambda = \lambda\varrho$ with respect to $(\delta_\lambda)_{\lambda > 0}$. If $\varrho$ is also positive definite, i.e. $\varrho(\xi) = 0$ if and only if $\xi = 0$, then $\varrho$ is called a homogeneous norm. We refer the reader to \cite[Chapter~5]{BLU07} for more details. 

\begin{corollary}
Let $\bbG$ be a stratified Lie group of horizontal dimension $n_0$ and let $\varrho$ be a homogeneous norm on $\bbG$. Define $\kappa(\varrho) = \inf\{\tau > 0 \mid \varrho \leq \tau N\}$. Then 
\[ \int_\bbG \frac{f^2}{\varrho^2}d\xi \leq \min\left(4, \left(\frac{2}{n_0 - 2}\right)\right)^2 \kappa^2(\abs{x_0})\kappa^2(\varrho) \int_\bbG \abs{\nabla_\bbG f}^2d\xi \]
holds for $f \in C_c^\infty(\bbG \setminus \{0\})$. 
\end{corollary}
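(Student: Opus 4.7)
The plan is to reduce the inequality to the horizontal Hardy inequality (the first corollary above) via comparability of homogeneous (quasi)norms on $\bbG$. The key pointwise step is the bound
\[ \frac{1}{\varrho^2} \leq \kappa^2(\varrho)\kappa^2(\abs{x_0})\frac{1}{\abs{x_0}^2} \]
on the open set $\{\abs{x_0} > 0\}$; integrating against $f^2 d\xi$ and invoking the horizontal Hardy inequality then yields the claim.

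To derive the pointwise bound I would combine two comparability estimates. The definition of $\kappa(\abs{x_0})$ gives $\abs{x_0} \leq \kappa(\abs{x_0}) N$, hence $N^{-1} \leq \kappa(\abs{x_0})\abs{x_0}^{-1}$ wherever $\abs{x_0}>0$. Since $\varrho$ and $N$ are both homogeneous norms they are two-sidedly comparable by \cite[Corollary~5.1.4]{BLU07}; reading $\kappa(\varrho)$ as (or as comparable to) the constant in the reverse direction $N \leq \kappa(\varrho)\varrho$ gives $\varrho^{-1} \leq \kappa(\varrho) N^{-1}$, and chaining the two estimates yields the displayed bound.

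After integrating, the horizontal Hardy inequality with $\alpha=0$ and $x_0'=x_0$ bounds $\int_\bbG f^2/\abs{x_0}^2 d\xi$ by $(2/(n_0-2))^2\int_\bbG \abs{\nabla_\bbG f}^2 d\xi$ for $n_0 \geq 3$. The expression $\min(4, 2/(n_0-2))^2$ in the stated constant simply uses that $(2/(n_0-2))^2 \leq 4$ for $n_0 \geq 3$ (with equality at $n_0=3$), the value $4$ serving as a convenient uniform upper bound and as the benchmark constant of the logarithmic variant used for the borderline dimension.

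The main obstacle I foresee is that $\abs{x_0}$ vanishes on the whole subgroup $\bigoplus_{i\geq 1}\bbR_i$, not only at the origin, so the preceding horizontal Hardy inequality strictly requires $f \in C_c^\infty(\bbG \setminus \{\abs{x_0}=0\})$, whereas our hypothesis only gives $f \in C_c^\infty(\bbG \setminus \{0\})$. The target $\int f^2/\varrho^2 d\xi$ is always finite since $\varrho$ is positive definite, yet the intermediate $\int f^2/\abs{x_0}^2 d\xi$ may diverge, so the pointwise chain cannot be integrated term by term for general such $f$. To bridge this I would apply the inequality to $\chi_\epsilon f$, where $\chi_\epsilon(\xi) = \chi(\abs{x_0}/\epsilon)$ is a smooth cutoff vanishing on $\{\abs{x_0} \leq \epsilon\}$, and let $\epsilon \downarrow 0$. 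The delicate step is showing the commutator term $\int \abs{\nabla_\bbG \chi_\epsilon}^2 f^2 d\xi$ vanishes in the limit: since its support is a tube of volume $O(\epsilon^{n_0})$ around $\{\abs{x_0}=0\}$ with integrand of size $O(\epsilon^{-2})$, this holds precisely when $n_0 \geq 3$, in agreement with the hypothesis implicit in the horizontal Hardy constant.
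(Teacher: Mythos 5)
Your reduction to the horizontal Hardy inequality via a cutoff is a workable alternative to the paper's argument when $n_0 \geq 3$, and your accounting of the error term $\int \abs{\nabla_\bbG \chi_\epsilon}^2 f^2\,d\xi = O(\epsilon^{n_0-2})$ is correct (modulo the standard $(1+\delta)$/$(1+\delta^{-1})$ splitting of the cross term, with $\epsilon \to 0$ before $\delta \to 0$ so as not to lose the constant). But there is a genuine gap: your proof only covers $n_0 \geq 3$, whereas the statement places no such restriction, and the case $n_0 = 2$ is not a fringe case --- it is the Heisenberg group $\bbH^1$. Your own estimate shows the cutoff argument fails exactly there ($O(\epsilon^{0})$ does not vanish), and independently the horizontal Hardy constant $(2/(n_0-2))^2$ degenerates at $n_0 = 2$, so no repair within your framework is available. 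You also misread the role of the $4$ in $\min(4, 2/(n_0-2))^2$: it is not a ``convenient uniform upper bound'' (indeed $(2/(n_0-2))^2 \leq 4$ already for $n_0 \geq 3$, so the $\min$ would be redundant on your reading); it is the constant that takes over precisely when $n_0 \leq 2$, coming from the half-space Hardy inequality with $Q = 1$.

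The paper's route differs in two respects that close this gap. First, instead of cutting $f$ off near $\{\abs{x_0}=0\}$, it regularises the \emph{weight}: $\psi_\epsilon = \abs{x_0} + \epsilon N$ vanishes only at the origin, satisfies the one-sided comparison \eqref{qcondlower} with $Q = n_0 + o(\epsilon)$, and hence yields a Hardy inequality directly valid for $f \in C_c^\infty(\bbG \setminus \{0\})$ via Corollaries \ref{cor:lower}/\ref{cor:upper}, with no commutator term to control. Second, for $n_0 \leq 2$ it replaces $\abs{x_0}$ by a single horizontal coordinate $\abs{x_{0,1}}$ (so $Q = 1$ and the constant is $(2/(1-2))^2 = 4$), again regularised by $\epsilon N$; this is the missing idea your proposal would need. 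Finally, you correctly flag that deducing $1/\varrho \lesssim 1/N$ requires comparability in the direction $N \lesssim \varrho$, which is not literally the constant $\kappa(\varrho) = \inf\{\tau : \varrho \leq \tau N\}$ as defined; that imprecision is shared with the statement itself and is not specific to your argument.
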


\begin{proof}
Firstly, note $\kappa(\varrho)$ is well-defined by \cite[Corollary~5.1.4]{BLU07} which asserts comparability of all homogeneous norms, and $\kappa(\abs{x_0})$ is well-defined since $\abs{x_0}$, although not a homogeneous norm itself (but only a quasinorm), is controlled by a ``euclidean-esque" norm (see \cite[p.229]{BLU07}) and therefore by $N$. If $n_0 \geq 3$ then consider $\psi = \psi_\epsilon \vcentcolon= \abs{x_0} + \epsilon N$ where $\epsilon > 0$. It can be shown $\psi$ verifies a lower bound \eqref{qcondlower} with constant $Q = n_0 + o(\epsilon)$ for $\Delta_\bbG$ and, since $\psi_\epsilon \leq (\kappa(\abs{x_0}) + \epsilon)N$, it follows from Corollary \ref{cor:upper} that
\begin{align*}
    \int_\bbG \frac{f^2}{N^2}d\xi &\leq (\kappa(\abs{x_0}) + \epsilon)^2 \int_\bbG \frac{f^2}{\psi^2}d\xi \vphantom{\left(\frac{2}{Q - 2}\right)^2\left(\frac{1 + \epsilon}{1 - 2\epsilon + \epsilon^2}\right)^2\int_\bbG \abs{\nabla_\bbG f}^2d\xi} \\
    &\leq \frac{(\kappa(\abs{x_0}) + \epsilon)^2}{1 - 2\epsilon + \epsilon} \int_\bbG \frac{\abs{\nabla_\bbG \psi}^2}{\psi^2}f^2d\xi \vphantom{\left(\frac{2}{Q - 2}\right)^2\left(\frac{\kappa(\abs{x_0}) + \epsilon}{1 - 2\epsilon + \epsilon^2}\right)^2\int_\bbG \abs{\nabla_\bbG f}^2d\xi} \\
    &\leq \left(\frac{2}{Q - 2}\right)^2\left(\frac{\kappa(\abs{x_0}) + \epsilon}{1 - 2\epsilon + \epsilon^2}\right)^2\int_\bbG \abs{\nabla_\bbG f}^2d\xi.
\end{align*}
The desired conclusion follows by taking $\epsilon \rightarrow 0^+$. If $n_0 \leq 2$ then replace $\abs{x_0}$ in $\psi$ with $\abs{x_{0, 1}}$ where $x_{0, 1}$ is the first horizontal coordinate and appeal to Corollary \ref{cor:lower} instead. 
\end{proof}

In particular, this implies a Hardy inequality for $\varrho$ the Carnot-Carath\'eodory distance $d_{\text{CC}}$ on $\bbG$ defined by
\[ d_{\text{CC}}(x) \vcentcolon= \sup_{\abs{\nabla_\bbG f}^2 \leq 1} f(x) - f(0), \quad x \in \bbG. \]
On the Heisenberg group $\bbG = \bbH^m$ this was first proved by \cite[Theorem~1]{Leh17}, but without a description of the optimal Hardy constant, and later improved by \cite[Theorem~1]{FP21} which established 
\[ \int_{\bbH^m} \frac{f^2}{d_{\text{CC}}^2}d\xi \leq \frac{1}{m^2}\int_{\bbH^m} \abs{\nabla_{\bbH^m} f}^2d\xi, \]
that is the optimal Hardy constant is not larger than $\frac{1}{m^2}$ which coincides with $(\frac{2}{(2m+2)-2})^2 = (\frac{2}{Q(\bbH^m) - 2})^2$. Note the weighted Hardy inequality \eqref{hardygeneral} for $d_{\text{CC}}$ can also be obtained for $\alpha$ small enough, by appealing to Corollary \ref{cor:lower}; this follows from the upper bound estimate \cite[Theorem~6.1]{HZ09} for $\bbH^m$ and which holds more generally for groups of Heisenberg type. 

\subsubsection{Half-space inequalities} 
Although the dimension $n_0$ in the statement of the horizontal Hardy inequality satisfied $n_0 \geq 2$, in fact $n_0 = 1$ is perfectly acceptable in the theory of the previous chapter, but is somewhat ``special" since it corresponds to the class of half-space inequalities. Indeed, the functions $\psi(\xi) = \abs{z}$ where $z$ is any coordinate in any stratum in a stratified Lie group $\bbG$ verify \eqref{qcond} with constant $Q = 1$, since $\Delta_\bbG$ has no zeroth order (multiplicative) components and so annihilates $\psi$. That the Hardy inequality holds on a half-space is just the condition that $f$ be compactly supported away from the zero locus of $\psi$, since to be compactly supported away from the hyperplane $\{\abs{z} = 0\}$ amounts to being compactly supported on either side of it. 

\begin{corollary}
Let $\bbG$ be a stratified Lie group, $z$ any coordinate in any stratum in $\bbG$. 
\[ \int_\bbG z^\alpha \frac{\abs{\nabla_\bbG z}^2}{z^2}f^2d\xi \leq \left(\frac{2}{\alpha - 1}\right)^2 \int_\bbG z^\alpha \abs{\nabla_\bbG f}^2d\xi \]
holds for $f \in C_c^\infty(\bbG \setminus \{\abs{z} = 0\})$ and $\alpha$ such that $\alpha \neq 1$, and 
\[ \int_\bbG z \log^\alpha z\frac{\abs{\nabla_\bbG z}^2}{z^2\log^2 z}f^2d\xi \leq \left(\frac{2}{\alpha - 1}\right)^2 \int_\bbG z \log^\alpha z \abs{\nabla_\bbG f}^2d\xi \] 
holds for $f \in C_c^\infty(\bbG \setminus \{\abs{z} \in \{0, 1\})$ and any $\alpha \neq 1$. 
\end{corollary}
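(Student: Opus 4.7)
The plan is to exhibit $\psi = \abs{z}$ as a weight verifying \eqref{qcond} with $Q = 1$ and then to invoke Theorem \ref{mainthm} for the standard Hardy inequality and the concluding theorem of Section \ref{s3.4} for the logarithmic one.

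First I would check that $\psi = \abs{z}$ satisfies \eqref{qcond} with $Q = 1$: each canonical first-stratum vector field $X_i$ is a left-invariant first-order operator with no zeroth-order (multiplicative) component, so the sublaplacian $\Delta_\bbG = \sum_i X_i^2$ annihilates any single coordinate function, giving $\Delta_\bbG z = 0$ and hence $\Delta_\bbG \abs{z} = 0$ away from $\{z = 0\}$. This forces $L\psi = 0 = \frac{Q - 1}{\psi}\Gamma(\psi)$ with $Q = 1$, regardless of the (in general nontrivial) value of $\Gamma(\psi) = \abs{\nabla_\bbG z}^2$. Since $Q = 1 \neq 2$, Theorem \ref{mainthm} applies and \eqref{hardygeneral}, specialised with $d\mu = d\xi$, $\Gamma(f) = \abs{\nabla_\bbG f}^2$, and sharp constant $(2/(Q + \alpha - 2))^2 = (2/(\alpha - 1))^2$, yields the first stated inequality for every $\alpha \neq 1$. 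The hypothesis $f \in C_c^\infty(\bbG \setminus \{\abs{z} = 0\})$ confines $\supp f$ to one or the other half-space of $\{z \neq 0\}$, so $z^\alpha$ is unambiguous once read as $\abs{z}^\alpha$ on each component.

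For the logarithmic inequality, I would apply the concluding theorem of Section \ref{s3.4} with $Q = 1$. Since $f$ has compact support disjoint from $\{\abs{z} \in \{0, 1\}\}$, decompose $f = f_- + f_+$ with $f_\pm$ supported respectively in $\{0 < \abs{z} < 1\}$ and $\{\abs{z} > 1\}$; on $f_-$ apply the theorem with $\Phi = -\log\abs{z}$, and on $f_+$ with $\Phi = \log\abs{z}$, both being smooth and positive on the relevant region so that $\Phi^{-1/2}$ is admissible. The theorem delivers the inequality with constant $(2/(\alpha - 1))^2$ on each piece, and summation over the disjoint supports reproduces the stated logarithmic Hardy inequality for $\alpha \neq 1$.

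The main point demanding care is the identity $\Delta_\bbG z = 0$ when $z$ lies in a stratum of index $\geq 2$, since the canonical vector fields then carry nontrivial polynomial coefficients. For first-stratum coordinates it is immediate from $X_i z = \delta_{ij}$; for higher strata one appeals to the exponential-coordinate form $X_i = \partial_{x_i} + \sum_{j} P_{ij}(x)\partial_{x_j}$ in which $P_{ij}$ depends only on coordinates strictly below the stratum of $x_j$, and verifies by direct bookkeeping --- consistent with the Heisenberg and Engel group computations --- that this structural constraint forces $\sum_i X_i^2 z = 0$ for every coordinate $z$.
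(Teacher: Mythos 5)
Your argument is correct and follows essentially the same route as the paper: identify $\psi = \abs{z}$ as verifying \eqref{qcond} with $Q = 1$ (because the canonical sublaplacian $\Delta_\bbG = \sum_i X_i^2$ has no lower-order part and hence annihilates coordinate functions), then invoke \eqref{hardygeneral} and the logarithmic theorem of \hyperref[s3.4]{\S3.4} with constant $\bigl(\tfrac{2}{Q+\alpha-2}\bigr)^2 = \bigl(\tfrac{2}{\alpha-1}\bigr)^2$. You in fact supply more detail than the paper does, both on why $\Delta_\bbG z = 0$ for higher-stratum coordinates and on the $\{\abs{z}<1\}$ versus $\{\abs{z}>1\}$ splitting in the logarithmic case.
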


\begin{proof}
The coordinate functions verify \eqref{qcond} with constant $Q = 1$ for $L = \Delta_\bbG$.
\end{proof}

For instance, taking a coordinate in the first and second stratum on the Heisenberg group $\bbG = \bbH^m$ respectively recovers \cite[Theorem~2.1]{Lar16} and \cite[Theorem~1.1]{LY08}, and taking $\psi = \abs{x}$ on the line recovers the original Hardy inequality 
\[ \int_{\bbR_{>0}} \frac{f^2}{x^2}dx \leq 4\int_{\bbR_{>0}} (f')^2dx, \quad f \in C_c^\infty(\bbR_{>0}) \]
first proved by \cite[Equation~4]{Har20}.

\subsubsection{Weighted inequalities}\label{subsec4.1.1}
In proving the weighted Hardy inequalities through the operator $L_\omega = \omega L + \Gamma(\omega, -)$ we argued in \hyperref[s3.4]{\S3.4} if $\psi$ verifies \eqref{qcond} for $L$ with constant $Q \in \bbR$ then it suffices to study the quantity $\smash{\frac{\psi\Gamma(\omega, \psi)}{\omega\Gamma(\psi)}}$ to understand if $\psi$ verifies \eqref{qcond} for $L_\omega = \omega Lf + \Gamma(\omega, -)$, and that for $\omega = \psi^\alpha$ this quantity is exactly equal to $\alpha \in \bbR$. It turns out $\omega$ need not always be a power of $\psi$. For instance, on a $H$-type group $\bbG$ there are formulas for
\[ \nabla_\bbG \abs{x_0} \cdot \nabla_\bbG N = \frac{\abs{x_0}^3}{N^3} \Mand \abs{\nabla_\bbG N}^2 = \frac{\abs{x_0}^2}{N^2}, \]
(see \cite[pp.~51,~83]{Ing10}) from which it follows with $\psi = N$ and $\omega = \abs{x_0}^\alpha$ that 
\[ \frac{\psi \Gamma(\omega, \psi)}{\omega \Gamma(\psi)} = \frac{N \nabla_\bbG \abs{x_0}^\alpha \cdot \nabla_\bbG N}{\abs{x_0}^\alpha \abs{\nabla_\bbG N}^2} = \alpha \]
Since $N$ verifies \eqref{qcond} with constant $Q = Q(\bbG)$ for $L = \Delta_\bbG$, it follows $N$ verifies \eqref{qcond} with constant $Q = Q(\bbG) + \alpha$ for $L = L_\omega$ and, in fact, since $\Gamma$ is a derivation, this can be generalised to the case $\omega = \abs{x_0}^\alpha N^\beta$.

\begin{corollary}\label{prop:weightedN}
Let $\bbG$ be a $H$-type group. Then 
\[ \int_{\bbG} \abs{x_0}^\alpha N^\beta \frac{\abs{\nabla_\bbG N}^2}{N^2}f^2d\xi \leq \left(\frac{2}{Q(\bbG) + \alpha + \beta - 2}\right)^2 \int_{\bbG} \abs{x_0}^\alpha N^\beta \abs{\nabla_\bbG f}^2d\xi \]
holds for $f \in C_c^\infty(\bbG \setminus \{\abs{x_0} = 0\})$ and $\alpha, \beta$ such that $Q(\bbG) + \alpha + \beta \neq 2$, and 
\[ \int_{\bbG} \abs{x_0}^\alpha N^\beta \log^\gamma N \frac{\abs{\nabla_\bbG N}^2}{N^2\log^2N}f^2d\xi \leq \left(\frac{2}{\gamma-1}\right)^2 \int_{\bbG} \abs{x_0}^\alpha N^\beta \log^\gamma N \abs{\nabla_\bbG f}^2d\xi \]
holds for $f \in C_c^\infty(\bbG \setminus \{\abs{x_0} \in \{0, 1\}\})$ and $\alpha, \beta, \gamma$ such that $Q(\bbG) + \alpha + \beta = 2$ and $\gamma \neq 1$. 
\end{corollary}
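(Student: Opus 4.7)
The plan is to realise both inequalities as instances of Theorem \ref{mainthm} applied not to $L = \Delta_{\bbG}$ but to the weighted diffusion $L_\omega = \omega L + \Gamma(\omega, -)$ with weight $\omega = \abs{x_0}^\alpha N^\beta$, exactly in the spirit of the discussion in \hyperref[s3.4]{\S3.4} and of the paragraph immediately preceding the statement. The key algebraic input is the computation, recalled in the text, that
\[ \frac{\psi \Gamma(\omega, \psi)}{\omega \Gamma(\psi)} = \alpha + \beta \quad\text{when}\quad \psi = N,\ \omega = \abs{x_0}^\alpha N^\beta, \]
on an $H$-type group.

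First I would verify this identity. Because $\Gamma$ is a derivation and the chain rule \eqref{intro:chainrule2} holds, one expands
\[ \Gamma(\abs{x_0}^\alpha N^\beta, N) = \alpha \abs{x_0}^{\alpha - 1} N^\beta \Gamma(\abs{x_0}, N) + \beta \abs{x_0}^\alpha N^{\beta - 1} \Gamma(N), \]
and then substitutes the two explicit $H$-type formulas $\Gamma(\abs{x_0}, N) = \nabla_\bbG\abs{x_0} \cdot \nabla_\bbG N = \abs{x_0}^3/N^3$ and $\Gamma(N) = \abs{x_0}^2/N^2$ to see that the term in $\alpha$ contributes $\alpha$ and the term in $\beta$ contributes $\beta$ after dividing by $\omega \Gamma(\psi)/\psi$. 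Combined with the fact that $N$ already verifies \eqref{qcond} with constant $Q = Q(\bbG)$ for $L = \Delta_\bbG$ (used in Corollary \ref{cor:hardyN}), the identity $L_\omega \psi = \omega L \psi + \Gamma(\omega, \psi)$ together with $\Gamma_\omega = \omega \Gamma$ yields
\[ L_\omega N = \frac{Q(\bbG) + \alpha + \beta - 1}{N}\, \Gamma_\omega(N), \]
i.e.\ $N$ verifies \eqref{qcond} for $L_\omega$ with constant $Q(\bbG) + \alpha + \beta$.

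Next I would invoke Theorem \ref{mainthm} applied to the diffusion Markov triple associated with $L_\omega$, whose reversible measure is still $d\xi$ and whose carr\'e du champ is $\Gamma_\omega(f) = \omega \abs{\nabla_\bbG f}^2 = \abs{x_0}^\alpha N^\beta \abs{\nabla_\bbG f}^2$. In the generic case $Q(\bbG) + \alpha + \beta \neq 2$, the bound \eqref{hardy} of Theorem \ref{mainthm} specialises to
\[ \int_\bbG \abs{x_0}^\alpha N^\beta \frac{\abs{\nabla_\bbG N}^2}{N^2} f^2 \, d\xi \leq \left(\frac{2}{Q(\bbG) + \alpha + \beta - 2}\right)^2 \int_\bbG \abs{x_0}^\alpha N^\beta \abs{\nabla_\bbG f}^2 \, d\xi, \]
which is the first inequality. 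In the critical case $Q(\bbG) + \alpha + \beta = 2$, I would invoke the logarithmic part of Theorem \ref{mainthm} (inequality \eqref{loghardygeneral}) for $L_\omega$ with $\psi$ replaced by $N$ and parameter $\alpha$ there equal to $\gamma$ here, which directly yields the second inequality.

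The main obstacle, as flagged in \hyperref[s3.4]{\S3.4}, is that the technical assumption \eqref{technical} for the semigroup generated by $L_\omega$ is hard to verify (more so than for $L = \Delta_\bbG$ itself). This is circumvented by the remark at the end of that section: all of the Hardy inequalities in Theorem \ref{mainthm} can be obtained by direct pointwise integration of the curvature bound \eqref{truecond} against $d\xi$, without passing through the multiplicative subcommutation \eqref{multineq}; for the present corollary this integration is over $C_c^\infty(\bbG \setminus \{\abs{x_0} = 0\})$ (respectively $C_c^\infty(\bbG \setminus \{\abs{x_0} \in \{0, 1\}\})$), which is precisely the class of test functions for which $\omega$ and its derivatives cause no boundary problems. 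A secondary mild check is that $N$ itself belongs to $C^\infty(X)$ in the sense required, which is standard since $N$ is smooth away from the origin and $\abs{\nabla_\bbG N}$ vanishes only on $\{\abs{x_0} = 0\}$.
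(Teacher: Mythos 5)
Your proposal is correct and follows essentially the same route as the paper: the corollary is obtained exactly as in the paragraph preceding it, by checking via the $H$-type formulas and the derivation property that $N$ verifies \eqref{qcond} with constant $Q(\bbG)+\alpha+\beta$ for $L_\omega$ with $\omega = \abs{x_0}^\alpha N^\beta$, and then applying Theorem \ref{mainthm} to $L_\omega$, with the technical assumption sidestepped as in \hyperref[s3.4]{\S3.4}. No gaps.
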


One can also find inequalities where the roles of $\abs{x_0}$ and $N$ are reversed, but the inequalities then fall under the scope of Corollaries \ref{cor:lower}, \ref{cor:upper}, \ref{cor:loglower}, \ref{cor:logupper}, since 
\[ \frac{\abs{x_0}\Gamma(N^\beta, \abs{x_0})}{N^\beta \Gamma(\abs{x_0}, \abs{x_0})} = \frac{\abs{x_0}\nabla_\bbG N^\beta \cdot \nabla_\bbG \abs{x_0}}{N^\beta \abs{\nabla_\bbG \abs{x_0}}^2} = \beta \frac{\abs{x_0}^4}{N^4} \]
furnishes either a laplacian lower or upper bound depending on the sign of $\beta$. Similarly, mixed weight Hardy inequalities involving the Carnot-Carath\'eodory distance are possible. 

\subsubsection{Radial inequalities}
It turns out that the euclidean Hardy inequality continues to hold even if $\abs{\nabla f}^2$ is replaced with $\abs{\nabla \abs{x} \cdot \nabla f}^2 \leq \abs{\nabla f}^2$, that is 
\[ \int_{\bbR^m} \frac{f^2}{\abs{x}^2}dx \leq \left(\frac{2}{m-2}\right)^2 \int_{\bbR^m} \abs{\nabla \abs{x} \cdot \nabla f}^2dx =\vcentcolon \left(\frac{2}{m-2}\right)^2 \int_{\bbR^m} \abs{\partial_rf}^2dx. \] 
This inequality is called a radial Hardy inequality in the literature (see, for instance, \cite{BE11, Ges19}), and $\partial_r = \nabla \abs{x} \cdot \nabla$ is usually called the radial derivative. The study and proof of such inequalities can be replicated in our setting with additional work. The basic idea relies on the fact the map $f \mapsto \nabla \abs{x} \cdot \nabla f = \Gamma(\abs{x}, f)$ for $\Gamma$ the carr\'e du champ of the euclidean laplacian is a derivation and so its square is a diffusion. 

\begin{theorem}\label{thm:radialhardy}
Let $0 \leq \psi \in C^\infty(X)$ verify \eqref{qcond} with constant $Q \in \bbR$ for $L$, assume $\Gamma(\psi, \Gamma(\psi)) = 0$. Then
\begin{equation}\label{radialhardy}
    \int \psi^\alpha \frac{\Gamma(\psi)^2}{\psi^2}f^2d\mu \leq \left(\frac{2}{Q + \alpha - 2}\right)^2 \int \psi^\alpha \Gamma(\psi, f)^2d\mu
\end{equation}
holds for $f \in C_c^\infty(X \setminus \{\psi = 0\})$ and $\alpha$ such that $Q + \alpha \neq 2$, and 
\begin{equation}\label{radialloghardy} 
    \int \psi^{2 - Q} \log^\alpha\psi \frac{\Gamma(\psi)^2}{\psi^2 \log^2\psi} f^2d\mu \leq \left(\frac{2}{\alpha-1}\right)^2 \int \psi^{2 - Q} \log^\alpha\psi \Gamma(\psi, f)^2d\mu
\end{equation}
holds for $f \in C_c^\infty(X \setminus \{\psi \in \{0, 1\}\})$ and $\alpha \neq 1$. 
\end{theorem}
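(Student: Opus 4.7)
The plan is to reduce both \eqref{radialhardy} and \eqref{radialloghardy} to the abstract Hardy inequalities of Theorem~\ref{mainthm} applied to an auxiliary diffusion obtained by ``squaring'' the first-order derivation $f \mapsto \Gamma(\psi, f)$, as hinted at in the euclidean paragraph preceding the statement.

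First, I would introduce the second-order operator
\[ \bar L f \vcentcolon= \Gamma(\psi, \Gamma(\psi, f)) + L\psi \cdot \Gamma(\psi, f). \]
Since $\Gamma$ is a derivation in each argument, $\Gamma(\psi, \cdot)$ is itself a first-order derivation, so a direct expansion gives that $\bar L$ satisfies the diffusion chain rule \eqref{intro:chainrule1} with carré du champ $\bar\Gamma(f, g) = \Gamma(\psi, f)\Gamma(\psi, g)$. The drift term $L\psi \cdot \Gamma(\psi, \cdot)$ is chosen precisely so that $\bar L$ is symmetric with respect to the \emph{same} reversible measure $\mu$: the identity $\int \Gamma(\psi, f) g\, d\mu = -\int f \Gamma(\psi, g)\, d\mu - \int fg\, L\psi\, d\mu$ on $\cA_0$, obtained by integrating $\Gamma(\psi, fg) = f\Gamma(\psi, g) + g\Gamma(\psi, f)$ and using \eqref{intro:ibp}, identifies the formal adjoint of $\Gamma(\psi, \cdot)$ as $-\Gamma(\psi, \cdot) - L\psi$, and a short calculation with its square then verifies self-adjointness of $\bar L$ on $L^2(\mu)$.

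Next I would check that $\psi$ itself satisfies \eqref{qcond} for $\bar L$ with the \emph{same} constant $Q$. Using the hypothesis $\Gamma(\psi, \Gamma(\psi)) = 0$ together with $L\psi = (Q - 1)\Gamma(\psi)/\psi$,
\[ \bar L\psi = \Gamma(\psi, \Gamma(\psi)) + L\psi \cdot \Gamma(\psi) = 0 + \frac{Q - 1}{\psi}\Gamma(\psi)^2 = \frac{Q - 1}{\psi}\bar\Gamma(\psi). \]
With the diffusion Markov triple $(X, \mu, \bar\Gamma)$ so constructed, inequality \eqref{radialhardy} is immediate by applying \eqref{hardygeneral} of Theorem~\ref{mainthm} to $\bar L$ with the same $Q$.

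For the logarithmic version \eqref{radialloghardy}, I would combine this squaring with the drift / measure-change trick of \S\ref{s3.4}: taking $\sigma = (2 - Q)\log\psi$ and passing from $\bar L$ to $\bar L_\sigma f = \bar Lf + \bar\Gamma(\sigma, f)$, whose reversible measure is $d\bar\mu_\sigma = \psi^{2-Q}\,d\mu$, the same computation shows $\psi$ verifies \eqref{qcond} for $\bar L_\sigma$ with the shifted constant $Q + (2 - Q) = 2$. Then \eqref{loghardygeneral} applied to $\bar L_\sigma$ delivers \eqref{radialloghardy} after rewriting the measure. The main obstacle is the bookkeeping around the auxiliary diffusion, in particular confirming that the drift $L\psi \cdot \Gamma(\psi,\cdot)$ makes $\bar L$ symmetric and that the $Q$-condition is preserved; both are made possible precisely by the hypothesis $\Gamma(\psi, \Gamma(\psi)) = 0$, which kills the only term that would otherwise spoil the clean form of $\bar L\psi$. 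As noted in \S\ref{s3.4}, no semigroup-theoretic technicalities are needed, since the Hardy inequalities can be obtained by integrating a pointwise curvature-type inequality directly.
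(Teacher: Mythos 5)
Your proposal is correct and follows essentially the same route as the paper: the auxiliary operator $\bar L$ you construct is exactly the paper's $L_\psi = -Z_\psi^* Z_\psi = Z_\psi^2 + (L\psi)Z_\psi$ with $Z_\psi = \Gamma(\psi,\cdot)$, the same adjoint computation identifies the drift, and the same use of $\Gamma(\psi,\Gamma(\psi))=0$ shows $\psi$ verifies \eqref{qcond} with the same constant $Q$ for the squared derivation, after which Theorem \ref{mainthm} (and the $\S$\ref{s3.4} measure-change for the logarithmic case) applies.
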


\begin{proof}
The starting point is the operator $L_\psi = -Z_\psi^*Z_\psi$ where $Z_\psi = \Gamma(\psi, - )$ and $Z_\psi^*$ is the adjoint of $Z_\psi$ with respect to $\mu$. Direct computation gives 
\[ \int g(Z_\psi f)d\mu = \int g\Gamma(\psi, f)d\mu = -\int f\Gamma(\psi, g)d\mu - \int fg(L\psi)d\mu =\vcentcolon \int f(Z_\psi^*g)d\mu \]
for $Z_\psi^* = -Z_\psi - L\psi$ from which it follows $L_\psi = Z_\psi^2 + (L\psi)Z_\psi$. By construction, $L_\psi$ has carr\'e du champ $\Gamma_\psi(f) = (Z_\psi f)^2$ and reversible measure $\mu$. 

Note
\[ \frac{\psi L_\psi\psi}{\Gamma_\psi(\psi)} = \frac{\psi(Z_\psi^2\psi + (L\psi)Z_\psi\psi)}{(Z_\psi\psi)^2} = \frac{\psi \Gamma(\psi, \Gamma(\psi))}{\Gamma(\psi)^2} + \frac{\psi L\psi \Gamma(\psi)}{\Gamma(\psi)^2} = Q - 1 \]
from which it follows $\psi$ verifies \eqref{qcond} with constant $Q \in \bbR$ for $L_\psi$. 
\end{proof}

We can easily recover the radial Hardy inequality in the euclidean setting, and more generally in the subelliptic setting with the horizontal norm $\abs{x_0}$ and the Kor\'anyi-Folland gauge $N$ playing the role of $\psi$. The latter however requires the analysis to occur on a $H$-type group so that the secondary condition $\Gamma(\psi, \Gamma(\psi)) = 0$ can be explicitly verified.  

\begin{corollary}\label{cor:radialhardyx}
Let $\bbG$ be a stratified Lie group of horizontal dimension $n_0 \geq 3$. Then 
\[ \int_\bbG \frac{f^2}{\abs{x_0}^2}d\xi \leq \left(\frac{2}{n_0-2}\right)^2 \int_\bbG \abs{\nabla_\bbG \abs{x_0} \cdot \nabla_\bbG f}^2d\xi \] 
holds for $f \in C_c^\infty(\bbG \setminus \{\abs{x_0} = 0\})$. 
\end{corollary}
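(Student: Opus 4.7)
The plan is to apply Theorem \ref{thm:radialhardy} directly with $\psi = \abs{x_0}$ and $\alpha = 0$, so the proof reduces to verifying the two hypotheses: that $\abs{x_0}$ satisfies \eqref{qcond} with constant $Q = n_0$ for $L = \Delta_\bbG$, and that the secondary condition $\Gamma(\abs{x_0}, \Gamma(\abs{x_0})) = 0$ holds.

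First, the condition \eqref{qcond} with $Q = n_0$ is already used in the proof of the horizontal Hardy inequality (the first corollary of Section \ref{sec4.1}): since the subgradient $\nabla_\bbG$ acting on a function of the horizontal coordinates alone coincides with the euclidean gradient in those coordinates, $\Delta_\bbG \abs{x_0} = \Delta_{\bbR^{n_0}} \abs{x_0} = (n_0 - 1)/\abs{x_0}$ and $\Gamma(\abs{x_0}) = \abs{\nabla_\bbG \abs{x_0}}^2 = 1$. So \eqref{qcond} holds with $Q = n_0$.

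Next, because $\Gamma(\abs{x_0})$ is the constant function $1$, we have $\Gamma(\abs{x_0}, \Gamma(\abs{x_0})) = \Gamma(\abs{x_0}, 1) = 0$ by the fact $\Gamma$ is a derivation in its second argument. This is precisely the secondary hypothesis of Theorem \ref{thm:radialhardy}, so specialising that theorem with $\psi = \abs{x_0}$ and $\alpha = 0$ gives
\[ \int_\bbG \frac{f^2}{\abs{x_0}^2}d\xi = \int_\bbG \frac{\Gamma(\abs{x_0})^2}{\abs{x_0}^2}f^2d\xi \leq \left(\frac{2}{n_0-2}\right)^2 \int_\bbG \Gamma(\abs{x_0}, f)^2d\xi, \]
and $\Gamma(\abs{x_0}, f) = \nabla_\bbG \abs{x_0} \cdot \nabla_\bbG f$ by definition of the carr\'e du champ of $\Delta_\bbG$. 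This is the claimed inequality.

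No step here is genuinely difficult: the only substantive content is the observation $\Gamma(\abs{x_0}) = 1$, which makes the secondary hypothesis trivial and which is what makes $\abs{x_0}$ behave exactly like the euclidean radial coordinate from the viewpoint of the radial machinery of Theorem \ref{thm:radialhardy}. The restriction $n_0 \geq 3$ is inherited from the requirement $Q + \alpha \neq 2$ (and, more stringently, the need for the constant $(2/(n_0-2))^2$ to be finite).
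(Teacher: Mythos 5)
Your proposal is correct and follows exactly the paper's own route: the paper likewise applies Theorem \ref{thm:radialhardy} with $\psi = \abs{x_0}$ and $\alpha = 0$, noting that everything reduces to the single observation $\Gamma(\abs{x_0}) \equiv 1$ (which gives both the laplacian comparison \eqref{qcond} with $Q = n_0$ and the vanishing of $\Gamma(\psi, \Gamma(\psi))$). Your write-up merely spells out the verification that the paper leaves implicit.
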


\begin{proof}
Taking $\alpha = 0$ in \eqref{radialhardy} with $L = \Delta_\bbG$ and $\psi = \abs{x_0}$, the result follows since $\Gamma(\psi) \equiv 1$. 
\end{proof}

\begin{corollary}\label{cor:radialhardyN}
Let $\bbG$ be a $H$-type group. Then
\[ \int_\bbG \frac{\abs{\nabla_\bbG N}^4}{N^2}f^2 d\xi \leq \left(\frac{2}{Q(\bbG) - 2}\right)^2 \int_\bbG \abs{\nabla_\bbG N \cdot \nabla_\bbG f}^2d\xi \] 
holds for $f \in C_c^\infty(\bbG \setminus \{0\})$. 
\end{corollary}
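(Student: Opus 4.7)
The plan is to apply Theorem~\ref{thm:radialhardy} directly with $L = \Delta_\bbG$, $\psi = N$, and $\alpha = 0$; since $\Gamma(N) = \abs{\nabla_\bbG N}^2$ and $\Gamma(N, f) = \nabla_\bbG N \cdot \nabla_\bbG f$, the inequality \eqref{radialhardy} at $\alpha = 0$ is precisely the claim. Two hypotheses of the theorem must be verified for $N$ on an $H$-type group: first, that $N$ satisfies \eqref{qcond} with constant $Q = Q(\bbG)$ for $\Delta_\bbG$; second, the secondary identity $\Gamma(N, \Gamma(N)) = 0$. Note also that $Q(\bbG) \neq 2$ on any non-abelian stratified group, so the $\alpha = 0$ case is permitted.

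The first hypothesis is immediate: it was already established in the proof of Corollary~\ref{cor:hardyN} as a direct consequence of $N$ being a power of the fundamental solution of $\Delta_\bbG$. I would simply cite that fact.

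The substantive step, which I expect to be the main obstacle, is the verification of $\Gamma(N, \Gamma(N)) = 0$, since this is the only place where the $H$-type structure enters essentially. My strategy is to combine the two explicit identities on an $H$-type group recalled in \hyperref[subsec4.1.1]{\S4.1.1}:
\[ \nabla_\bbG \abs{x_0} \cdot \nabla_\bbG N = \frac{\abs{x_0}^3}{N^3} \quad \text{and} \quad \abs{\nabla_\bbG N}^2 = \frac{\abs{x_0}^2}{N^2}. \]
The second identity lets me rewrite $\Gamma(N, \Gamma(N))$ as $\nabla_\bbG N \cdot \nabla_\bbG(\abs{x_0}^2 N^{-2})$, and expanding by the derivation property of $\Gamma$ gives
\[ \Gamma(N, \Gamma(N)) = \frac{2\abs{x_0}}{N^2}\bigl(\nabla_\bbG N \cdot \nabla_\bbG \abs{x_0}\bigr) - \frac{2\abs{x_0}^2}{N^3}\abs{\nabla_\bbG N}^2. \]
Substituting the first identity into the first term and the second into the second term turns each into $2\abs{x_0}^4 N^{-5}$, so they cancel.

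With both hypotheses verified, Theorem~\ref{thm:radialhardy} at $\alpha = 0$ yields the desired inequality without further work. I expect the cancellation underlying the secondary condition to be genuinely specific to the $H$-type setting (where $N$ behaves almost like a Euclidean norm in the horizontal variable, rescaled by a power of itself) and to fail on a general stratified group, which is consistent with the $H$-type hypothesis of the corollary.
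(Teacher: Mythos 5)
Your proposal is correct and follows exactly the paper's route: apply Theorem~\ref{thm:radialhardy} with $L = \Delta_\bbG$, $\psi = N$, $\alpha = 0$, and verify $\Gamma(N,\Gamma(N)) = 0$ via the two $H$-type identities from \S4.1.1. The paper leaves that cancellation to the reader, whereas you carry it out explicitly (both terms equal $2\abs{x_0}^4N^{-5}$), which is a correct and welcome elaboration rather than a deviation.
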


\begin{proof}
Taking $\alpha$ and $L$ as before with $\psi = N$, it suffices to show $\Gamma(N, \Gamma(N)) = 0$. This can be done by using the formulas in \hyperref[subsec4.1.1]{\S4.1.1}.
\end{proof}

\begin{remark}
The condition $\Gamma(\psi, \Gamma(\psi))$ is a somewhat generic one in that a stratified Lie group $\bbG$ for which its Kor\'anyi-Folland gauge $N$ satisfies $\Gamma(N, \Gamma(N)) = 0$ is called polarisable \cite[Definition~2.12]{BT02}. It should be noted in fact our condition is slightly stronger as a group is polarisable if and only if $N$ is $\infty$-harmonic, but this follows from the fact $\Gamma(N, \Gamma(N))$ vanishes. Polarisable groups are somewhat far and few between; the $H$-type groups were the first and only examples \cite[Corollary~5.6]{BT02}, until the recent work of \cite[Theorem~5.2]{Bie19}. Nonetheless they make appearances in, for instance, the study of the $p$-sublaplacian.
\end{remark}

An alternate version of the radial Hardy inequality can be obtained by a different but very similar approach. Recalling \S4.1, suppose the Lie algebra $\mf{g}$ of $\bbG$ admits the stratification $\mf{g} = \oplus_{i=0}^{r-1} \ \mf{g}_i$ and induces a coordinate system on $\bbG \cong \otimes_{i=0}^{r-1} \ \bbR_i$ where $\bbR_i \cong \bbR^{\dim(\mf{g}_i)}$. With $\nabla_{\bbR_i}$ denoting the euclidean gradient along the $i$-th stratum, we define the dilation operator on $\bbG$ as 
\[ D_\bbG f(\xi) \vcentcolon= \sum_{i=0}^{r-1} ix_i \cdot \nabla_{\bbR^i} f(\xi). \]
For instance, if $\bbG = \bbR^m$ is abelian and trivial, then $D_\bbG = x \cdot \nabla$, and if $\bbG = \bbH^1 \cong \bbR_x^1 \times \bbR_y^1 \times \bbR_z^1$ is the $3$-dimensional Heisenberg group, then $D_\bbG = \sum_{i=1}^m x_i\partial_{x_i} + y_i\partial_{y_i} + 2z\partial_z$. That is, $D_\bbG$ is a homogeneous analogue of $x \cdot \nabla$ where each component $x_i \cdot \partial_{x_i}$ is weighted according to the stratum to which $x_i$ belongs. The following result was first proven by \cite[Theorem~2.1.1]{RS19} where the square of the dilation operator is called Euler operator. Recall we introduced the notion of a homogeneous norm and quasinorm in \hyperref[sec4.1]{\S4.1}.

\begin{theorem}
Let $\bbG$ be a stratified Lie group and let $0 \leq \psi \in C^\infty(\bbG)$ be a homogeneous quasinorm on $\bbG$. Then 
\begin{equation}\label{Dradialhardy}
    \int_\bbG \psi^\alpha f^2d\xi \leq \left(\frac{2}{Q(\bbG) + \alpha}\right)^2 \int_\bbG \psi^\alpha (D_\bbG f)^2d\xi
\end{equation}
holds for $f \in C_c^\infty(\bbG \setminus \{\psi = 0\})$ and $\alpha$ such that $Q(\bbG) + \alpha \neq 0$, and 
\begin{equation}\label{Dradialloghardy}
    \int_\bbG \psi^{-Q(\bbG)} \log^\alpha\psi \frac{f^2}{\log^2\psi}d\xi \leq \left(\frac{2}{\alpha-1}\right)^2 \int_\bbG \psi^{-Q(\bbG)} \log^\alpha\psi (D_\bbG f)^2d\xi
\end{equation}
holds for $f \in C_c^\infty(\bbG \setminus \{\psi \in \{0, 1\}\})$ and $\alpha \neq 1$. 
\end{theorem}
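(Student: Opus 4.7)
The plan is to reduce to Theorem \ref{mainthm} by introducing an auxiliary diffusion $L_D$ on $\bbG$ adapted to $D_\bbG$, mirroring the strategy used in the proof of Theorem \ref{thm:radialhardy}. Regarding $D_\bbG$ as a first-order vector field on $\bbG$, its formal adjoint with respect to Lebesgue measure is $D_\bbG^* = -D_\bbG - \operatorname{div}(D_\bbG) = -D_\bbG - Q(\bbG)$, since the divergence of the dilation vector field equals the homogeneous dimension. I would then set $L_D := -D_\bbG^* D_\bbG = D_\bbG^2 + Q(\bbG) D_\bbG$ and verify directly, using that $D_\bbG$ is a derivation, that $L_D$ is a diffusion with carré du champ $\Gamma_D(f) = (D_\bbG f)^2$ and reversible measure $d\xi$.

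The key computation is that $\psi$ satisfies \eqref{qcond} for $L_D$ with constant $Q(\bbG) + 2$. This follows from the identity $D_\bbG \psi = \psi$, itself a consequence of differentiating the homogeneity relation $\psi \circ \delta_\lambda = \lambda \psi$ in $\lambda$ at $\lambda = 1$ (so that $D_\bbG$ appears as the infinitesimal generator of $(\delta_\lambda)_{\lambda > 0}$). With this, $L_D \psi = (Q(\bbG)+1)\psi$ and $\Gamma_D(\psi) = \psi^2$, giving $\psi L_D \psi = (Q(\bbG) + 1)\, \Gamma_D(\psi)$.

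Applying \eqref{hardygeneral} in Theorem \ref{mainthm} to $(L_D, \psi)$ with constant $Q(\bbG) + 2 \neq 2$ and simplifying $\Gamma_D(\psi)/\psi^2 = 1$ then immediately yields \eqref{Dradialhardy}. For the logarithmic inequality \eqref{Dradialloghardy}, which sits at the excluded critical value $Q(\bbG) + \alpha = 0$, I would invoke the drift-modification trick from \hyperref[s3.4]{\S3.4}: taking $L_\sigma f := L_D f + \Gamma_D(\sigma, f)$ with $\sigma = -Q(\bbG) \log \psi$ produces a diffusion with the same carré du champ $\Gamma_D$ but reversible measure $e^\sigma d\xi = \psi^{-Q(\bbG)} d\xi$, and a short computation using $\Gamma_D(\log \psi, \psi) = \psi$ gives $L_\sigma \psi = \psi$, so that $\psi$ now verifies \eqref{qcond} for $L_\sigma$ with constant $2$. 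The $Q = 2$ theorem of \S3.3 applied to $(L_\sigma, \psi)$ then yields \eqref{Dradialloghardy}, with the two regions $\{\psi \leq 1\}$ and $\{\psi \geq 1\}$ treated separately as in that theorem (which is acceptable since the support condition on $f$ excludes $\{\psi = 1\}$).

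The only genuine obstacle I expect is the low regularity of a generic homogeneous quasinorm $\psi$, which is only smooth almost everywhere. However, since all formal manipulations above happen pointwise on the set where $\psi$ is smooth, and the statement requires $f$ compactly supported away from $\{\psi = 0\}$ (and $\{\psi = 1\}$ in the log case), this is handled by the framework already established in \hyperref[s3.4]{\S3.4}, where the Hardy inequalities are obtained by direct integration of the pointwise curvature inequality rather than by passing through the semigroup.
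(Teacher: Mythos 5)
Your proposal is correct and follows essentially the same route as the paper: both construct the diffusion $-D_\bbG^*D_\bbG = D_\bbG^2 + Q(\bbG)D_\bbG$ with carr\'e du champ $(D_\bbG f)^2$, use the Euler identity $D_\bbG\psi = \psi$ to verify \eqref{qcond} with constant $Q(\bbG)+2$, and then invoke Theorem \ref{mainthm}. Your handling of the logarithmic case via the drift modification $\sigma = -Q(\bbG)\log\psi$ is exactly the mechanism behind the final theorem of \hyperref[s3.4]{\S3.4} that the paper implicitly appeals to, so there is no substantive difference.
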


\begin{proof}
As previously, the starting point is the operator $L_\bbG = -D_\bbG^*D_\bbG$ where $D_\bbG$ is the dilation operator and $D_\bbG^*$ is its adjoint with respect to the Lebesgue measure $d\xi$ on $\bbG$. To compute the adjoint, note $D_\bbG$ can be rewritten in the following way: disregard the stratification $\xi = (x_0, \cdots, x_{r-1}) \in \bbR_0 \times \cdots \bbR_{r-1} \cong \bbG$ of the coordinates $\xi \in \bbG$ and instead write $\xi = (y_1, \cdots, y_m) \in \bbR^m \cong \bbG$ for $m = \dim(\bbG)$. Then $D_\bbG = \sum_{j=1}^m c_jD_j$ where $D_j = y_j\partial_j$ and $c_j = k \in \{1, \cdots, r\}$ if and only if $y_j$ is a coordinate belonging to the $k$-th stratum. Since $D_j$ has adjoint $-D_j - 1$ with respect to Lebesgue measure, it follows 
\[ D_\bbG^* = -\sum_{j=1}^m c_j(D_j + 1) = -D_\bbG - Q(\bbG) \]
and therefore $L_\bbG = D_\bbG^2 + Q(\bbG)D_\bbG$. By construction, $L_\bbG$ has carr\'e du champ $\Gamma_\bbG(f) = (D_\bbG f)^2$ and reversible measure $d\xi$, and so now we play the same game with $\psi$ and $L_\bbG$. If $\psi$ is a homogeneous quasinorm, then $D_\bbG\psi = \psi$ by \cite[Proposition~1.3.1]{RS19}. Thus
\[ \frac{\psi L_\bbG \psi}{\Gamma_\bbG(\psi)} = \frac{\psi D_\bbG^2 \psi}{(D_\bbG \psi)^2} + \frac{\psi Q(\bbG) D_\bbG \psi}{(D_\bbG \psi)^2} = \frac{\psi^2}{\psi^2} + Q(\bbG)\frac{\psi^2}{\psi^2} = Q(\bbG) + 1, \]
i.e. $\psi$ verifies \eqref{qcond} with constant $Q(\bbG) + 2$ for $L$.
\end{proof}

\begin{remark}
Note in the euclidean setting $D = x \cdot \nabla$ recovers the results of Theorem \ref{thm:radialhardy}; indeed, $(D_{\bbR^m}f)^2 = \abs{x}^2 \abs{\nabla\abs{x} \cdot \nabla f}^2$, so taking $\psi = \abs{x}$ and replacing $\alpha$ with $\alpha + 2$ we see that \eqref{radialhardy} and \eqref{radialloghardy} coincide respectively with \eqref{Dradialhardy} and \eqref{Dradialloghardy}. 
\end{remark}

\subsection{Some more examples}
We remark the analysis of the previous section requires no explicit description of the geometry of the underlying space, which is instead expressed through the laplacian comparison \eqref{qcond}. For instance, Hardy inequalities, first appearing respectively in \cite{Amb04a} and \cite{Amb05}, can be established with respect to the Grushin and Heisenberg-Greiner operators. These operators generalise respectively the euclidean laplacian and Heisenberg sublaplacian, and they enjoy analogues of the horizontal norm and Kor\'anyi-Folland gauge together with a generalisation of the homogeneous dimension to nonintegral values (done by assigning nonintegral weight to the second stratum). 

Moreover, Hardy inequalities can be established on manifolds. There are already classical results that can be recovered, for instance \cite[Theorem~1.4]{carron1997inegalites}
\[ \int_M \rho^\alpha\frac{f^2}{\rho^2}dx \leq \left(\frac{2}{C+\alpha-1}\right)^2 \int_M \rho^\alpha\abs{\nabla u}^2dx \]
for $(M, g)$ a riemannian manifold, $\rho$ satisfying $\abs{\nabla \rho} = 1$ and $\Delta \rho \geq C/\rho$ can be recovered with $L = \Delta_g$ the Laplace-Beltrami operator and $\alpha$ such that $C + 1 + \alpha > 2$ since $\psi = \rho$ verifies a lower bound \eqref{qcondlower} with $Q = C + 1$. (Note there is a discrepancy since the convention in \cite{carron1997inegalites} is that $\Delta_g$ is the nonnegative laplacian.) There is also the inequality \cite[Corollary~4.10.2]{BGL14}
\[ \int_{H^m} f^2d\mu \leq \frac{4}{(1-m)^2}\int_{H^m} \Gamma_{H^m}(f)d\mu \]
for $H^m$ the $m$-dimensional hyperbolic space realised as the Poincar\'e upper half-plane $\bbR^{m-1} \times (0, \infty)$ for $m \geq 2$, which follows from taking $L = \Delta_{H^m}$ the hyperbolic laplacian on $H^m$ and $\psi = x_m$ verifying \eqref{qcond} with $Q = 3-m$. Similarly, the well-known Hardy inequality
\[ \int_\Omega \frac{f^2}{\delta^2}dx \leq 4\int_\Omega \abs{\nabla f}^2dx \] 
for a smooth and convex domain $\Omega \sbs \bbR^m$ and $\delta(x) = \dist(x, \partial\Omega)$ the distance to the boundary can also be recovered with $L = \Delta_\Omega$ the (flat) laplacian on $\Omega$ since $\delta$ is superharmonic.

\begin{acknowledgements}
\textup{The author would like to thank Boguslaw Zegarli\'nski for introducing the author to the problem, his warm encouragment, and for helpful discussions, and Pierre Monmarch\'e and Cyril Roberto for valuable discussions and suggestions. We are also very grateful to and would like to deeply thank the anonymous referee for their thorough reading and comments which helped improve the presentation of the paper. The author is supported by the President's Ph.D. Scholarship of Imperial College London.}
\end{acknowledgements}

\printbibliography

\end{document}